\def\classification#1{\def\@class{#1}}
\DeclareFontFamily{OT1}{rsfs}{}
\DeclareFontShape{OT1}{rsfs}{n}{it}{<-> rsfs10}{}
\DeclareMathAlphabet{\mathscr}{OT1}{rsfs}{n}{it}
\newcommand{\R}{{\mathbb R}}
\newcommand{\E}{\mathbb{E}}
\newcommand{\la}{\lambda}
\def\E{\mathsf {E}}
\newtheorem{conj}{Conjecture}
\newtheorem{theorem}{Theorem}
\newtheorem{lemma}[theorem]{Lemma}
\newtheorem{corollary}[theorem]{Corollary}
\theoremstyle{remark}
\def\E{\mathsf {E}}
\title{On discrete values of bilinear forms}
\author{Alex Iosevich, Oliver Roche-Newton, Misha Rudnev}
\address{Alex Iosevich, Department of Mathematics, University of Rochester, Rochester NY 14627, USA}
\email{iosevich@math.rochester.edu}
\address{Oliver Roche-Newton,  School of Mathematics and Statistics, Wuhan University, Wuhan,
Hubei Province, P.R.China 430072}
\email{o.rochenewton@gmail.com}
\address{Misha Rudnev, Department of Mathematics, University of Bristol,
  Bristol BS8 1TW, UK}
\email{m.rudnev@bristol.ac.uk}
\subjclass[2000]{68R05,11B75}
\begin{document}
\begin{abstract} This paper is an erratum to  our paper, entitled{ \em On an application of Guth-Katz theorem.} Math. Res. Lett. {\bf 18} (2011), no. 4, 691--697,  \cite{IRR}.

Let $F$ be the real or complex field and $\omega$ a non-degenerate skew-symmetric bilinear form in the plane $F^2$. We prove that for finite a point set $P\subset F^2\setminus\{0\}$,  the set $T_\omega(P)$ of nonzero values of $\omega$ in $P\times P$, if nonempty, has cardinality $\Omega(N^{9/13}).$

A presumably near-sharp estimate $\Omega(N/\log N)$ was claimed in  \cite{IRR} over the reals for a symmetric or skew-symmetric  form $\omega$. However, the set-up for the proof was flawed. We discuss why we believe that justifying the claim of \cite{IRR} in full strength is a major open problem.

In the special case when $P=A\times A$, where $A$ is a set of at least two reals, we establish the following sum-product type estimates:
$$ |AA+ AA|= \Omega \left(|A|^{19/12}\right), $$ and
$$|AA-AA|= \Omega\left( \frac{|A|^{26/17}}{\log^{2/17}|A|}\right).$$

\end{abstract}

\maketitle

\section{Introduction}
Let $F$ be the real or complex field and let $\omega$ be a non-degenerate symmetric or skew-symmetric bilinear form on $F^2$. Consider an $N$-element point set $P\subset F^2\setminus\{0\}$, that is $|P|=N$. The question we ask is, what is the minimum cardinality of
\begin{equation}
T_\omega(P) =\{\omega(q,q'):\,q,q'\in P\}\setminus\{0\}.
\label{qu}\end{equation}
One exceptional case is that the set $T_\omega(P)$ can be empty, given that the form $\omega$ is skew-symmetric and $P$ is supported on a single line through the origin.   But what if one excludes this special case? 
The following  conjecture appears to be a central open question of discrete projective plane geometry. 

\begin{conj}\label{cj} $|T_\omega(P)|=\Omega^*(N),$  that is possibly modulo logarithmic terms in $N$. \end{conj}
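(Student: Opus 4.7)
The strategy is to follow the Elekes--Sharir paradigm. By Cauchy--Schwarz, setting $n_t = |\{(q,q') \in P\times P : \omega(q,q') = t\}|$, one has $|T_\omega(P)| \geq (\sum_{t\neq 0} n_t)^2 / \sum_{t\neq 0} n_t^2$. In the generic case $\sum_{t\neq 0} n_t = \Omega(N^2)$ (the exceptional skew-symmetric configuration on a single line through the origin is excluded by the hypothesis that $T_\omega(P)$ is nonempty), so Conjecture~\ref{cj} reduces to a near-sharp quadruple bound $E_\omega(P) := \sum_{t\neq 0} n_t^2 = O^*(N^3)$, i.e.\ on the number of $(q_1,q_2,q_3,q_4) \in P^4$ with $\omega(q_1,q_2) = \omega(q_3,q_4) \neq 0$.

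Focusing on the skew-symmetric case (so $\omega = \det$ after a change of basis), take $G = \SL_2(F)$. Since $G$ preserves $\omega$ and acts simply transitively on ordered pairs of linearly independent vectors of a fixed determinant, each quadruple counted by $E_\omega(P)$ corresponds to a unique $g \in G$ with $gq_1 = q_3$ and $gq_2 = q_4$. For each $(q,q') \in P \times P$ with $q \neq 0$, the locus $S_{q,q'} = \{g \in G : gq = q'\}$ is a coset of the one-parameter unipotent stabilizer of $q$, hence a smooth algebraic curve in the three-dimensional variety $G$. Thus $E_\omega(P)$ counts pairwise incidences among the $N^2$ curves $\{S_{q,q'}\}$, casting Conjecture~\ref{cj} as an incidence problem on $G$ formally parallel to the Elekes--Sharir reduction of Erd\H{o}s's distinct distances problem to lines in $\R^3$. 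I would then attempt a Guth--Katz-style polynomial partitioning on $G$; weaker inputs such as Szemer\'edi--Trotter, applied to a planar point-line configuration indexed by the values $t$, suffice for the sub-linear bound $\Omega(N^{9/13})$ of the main theorem, but inherently lose a polynomial factor.

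The principal obstacle to the full conjecture is that the family $\{S_{q,q'}\}$ does not satisfy the pseudoline axioms underlying Guth--Katz: two stabilizer cosets can agree on a positive-dimensional subvariety, and a single $g \in G$ lies on many curves precisely when $P$ carries rich multiplicative or rotational structure. Polynomial partitioning on $G$ controls only the generic incidences, and degenerates on the ruled surfaces swept out by stabilizer cosets, after which the extra quadruples cannot be bounded without new structural input on $P$. This is the same rigidity difficulty that stalled the distinct distances problem for decades; in the non-commutative three-dimensional group $\SL_2(F)$ it appears strictly harder than for lines in $\R^3$, and it is exactly the step at which the original set-up in \cite{IRR} broke down. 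Closing the gap between $\Omega(N^{9/13})$ and $\Omega^*(N)$ therefore seems to demand genuinely new ideas.
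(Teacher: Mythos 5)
The statement you were asked about is Conjecture~\ref{cj}, which the paper presents as a central \emph{open} problem, not as a theorem: the paper proves only the much weaker partial bounds of Theorems~\ref{34}--\ref{epsilon2}. Your proposal, to its credit, does not claim to prove the conjecture either; it ends by concluding that ``genuinely new ideas'' are needed. So the right assessment is not ``does the proof work'' but ``does your strategy sketch match the paper's own diagnosis of where the Elekes--Sharir/Guth--Katz route breaks down'' --- and it largely does. Your reduction via Cauchy--Schwarz to the second-moment bound $\E(P)=O^*(N^3)$ is the same one the paper gives in the Appendix (and the paper is careful to call that bound a \emph{slightly stronger} statement, not an equivalent one, so ``reduces to'' overstates it). Your identification of the degeneracy --- that the stabilizer-coset curves $S_{q,q'}$ in $\SL_2(F)$ are not pairwise distinct --- is precisely the error the paper locates in \cite{IRR}: $S_{q,q'}=S_{\lambda q,\lambda q'}$, so the curve depends only on $(q:q')\in FP^3$, not on $(q,q')\in F^4$, and one gets fewer than $N^2$ distinct curves carrying multiplicities, which defeats a black-box Guth--Katz application.

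Two smaller inaccuracies are worth flagging. First, your account of how $\Omega(N^{9/13})$ is obtained is incomplete: the paper's Theorem~\ref{34} does not come from Szemer\'edi--Trotter alone but from combining a weighted Szemer\'edi--Trotter bound (Lemma~\ref{bone}) with an identity for signed areas and the cross-ratio growth theorem (Theorem~\ref{crt}) via Lemma~\ref{btwo}; a single Szemer\'edi--Trotter application only gives $\Omega(N^{2/3})$. Second, the claim that $\sum_{t\ne 0} n_t=\Omega(N^2)$ ``in the generic case'' is glib: under the hypothesis merely that $T_\omega(P)$ is nonempty, nearly all of $P$ could lie on one line through the origin, and then most pairs vanish under a skew-symmetric $\omega$; the paper handles this with the $P_1/P_2$ dichotomy and the threshold $w_0$. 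These are gaps in a sketch, not in a proof, but they matter if the sketch is meant to track the actual difficulty. Overall your commentary is a fair paraphrase of the paper's own explanation of why the conjecture remains open.
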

In fact, we have no evidence that the logarithmic factors should be there. After establishing partial results towards the resolution of this problem in the main body of the paper, additional discussion of the conjecture is presented in the appendix. 

Throughout the rest of the paper the standard symbols $O$, $\Omega$, and $\Theta$ are used to imply absolute constants in upper and lower bounds, respectively, as well as the symbols $\ll$, $\gg,$ and $\approx$. $C$ and $c$ stand for positive absolute constants, which may change from line to line.

All the sets we are dealing with are assumed to be finite and have more than one element. For such a set $A\subset F\setminus\{0\}$ we use the standard notations for its sum set
$$
A+A = \{a_1+a_2:\,a_1,a_2\in A\},
$$
the difference set $A-A$, the product set $AA$, the ratio set $A:A$, proceeding in the same vein with notations for sets arising from other algebraic operations on finite sets.

We write $\E(A,B)$ for the additive energy of two sets $A,B \subseteq F$, that is
$$
    \E(A,B) = |\{ a_1+b_1 = a_2+b_2 ~:~ a_1,a_2 \in A,\, b_1,b_2 \in B \}|,
$$and just $\E(A)$ instead of $\E(A,A).$

%Similarly, $$\E_k (A) = |\{ a_1-a'_1 = \dots = a_k-a'_k ~:~ a_j,a'_j \in A \}|.$$

\medskip

Define $\E(P)$ as the number of quadruples $(q,q',r,r')$ satisfying the equation:
\begin{equation}\label{eng}
\omega(q,q')=\omega(r,r')\neq 0,\qquad q,\ldots,r'\in P.
\end{equation}

The bound $O(N^3\log N)$ on the number $\E(P)$, over the reals,  was claimed  in \cite{IRR} in the case when $\omega$ is the standard cross-product or the dot product. Similar to the construction in the celebrated paper of Guth-Katz  \cite{GK}, which settled the plane Erd\H os distinct distances problem,  it was claimed that a pair $(q,r)\in F^4$ defines a line in the group $SL_2(F)$, and solutions of \eqref{eng} are in one-to-one correspondence with lines' pairwise intersections. Unfortunately, we discovered an error in the set-up. 

Technically, the error   came down to ignoring the presence of possible degeneracy, inherent in the problem. If a quadruple $(q,q',r,r')$ satisfies equation \eqref{eng}, then so does $(\lambda q,q',\lambda r, r')$ for any nonzero $\lambda \in F$. This is clearly not the case if one deals with the corresponding equation
\begin{equation}\label{pd}\|q-q'\|=\|r-r'\|\neq 0,\qquad q,\ldots,r'\in P,\end{equation} for equal pairs of distances.

In other words, a line in question in $SL_2(F)$ is defined not by the pair $(q,r)\in F^4$, but $(q:r)\in FP^3$, that is as homogeneous coordinates in the projective three-space. The initial claim in \cite{IRR} that there arise $\Omega(N^2)$ distinct lines in three dimensions is false. It is to this set of lines the Guth-Katz incidence theorem was applied. Instead, one may have fewer lines with multiplicities, and this makes the estimates coming from the Guth-Katz theorem significantly worse. 

For example, in the case of $P=A\times A$, $A$ being a geometric progression, the number of distinct lines in three dimensions would be approximately $N^{3/2}=|A|^{3}$, rather than $N^2$, each line appearing with the multiplicity, or weight, of about $|A|$. The application of the Guth-Katz theorem would only yield the estimate $O(N^{13/4}\log N)$ on the number $\E(P)$, even losing a factor of $\log N$ versus what one gets after a single application of the Szemer\'edi-Trotter theorem, presented in the forthcoming Lemma \ref{bone}.

If one looks back at equation (6) in \cite{IRR}, it is clear that a line in three dimension it describes is defined not by the quadruple $(a,b,c,d)\in F^4$, but 
$(a:b:c:d)\in FP^3$.

\medskip
It was recently shown by the third author in \cite{MR}, that in the special case when the maximum weight is bounded as $O(1)$, one gets $|T_\omega(P)|\gg N$ over any field $F$, under a constraint $N\ll p$ in positive characteristic $p$. We do not address fields of positive characteristic further because we do not have a suitably strong analogue of the forthcoming Theorem \ref{crt}. The rest of the machinery used can be replaced on the basis of the results in \cite{MR}. For more discussion on sum-product type estimates in the positive characteristic case see \cite{RRS}.

\medskip
Qualitatively, the error in \cite{IRR} was the false presupposition that the main result was achievable by an adaptation of the proof of the renown Erd\H os distance problem, which had been beautifully resolved by Guth and Katz \cite{GK}. As the title of \cite{IRR} indicated, it came in the wake of the celebrated work \cite{GK}, but as we indicated above, the setup does not work in this context. It appears that the Erd\H os distance problem is very special. For one gets a sharp second moment bound, that is the upper bound for the number of congruent line segments defined by a Euclidean plane point set $P$ after a single application of an incidence theorem in the projective three-space $\mathbb RP^3$ that Guth and Katz succeeded in proving. The theorem had two counterparts, Theorems 2.10 and 2.11 in \cite{GK}, but we refer to their union as the Guth-Katz theorem. 

On the contrary, it was suggested much earlier by Chang \cite{Ch1} that the Erd\H os-Szemer\'edi sum-product conjecture, even in its weak form discussed in the appendix, can hardly be expected to get resolved merely by a ``black box" application of an incidence theorem, even though the Guth-Katz theorem was not yet available. The fact that the metric second moment estimate for distances in the plane can be converted to a projective one for lines intersecting in three dimensions was discovered by Elekes and Sharir \cite{ES} by looking at the $SE_2(F)$ symmetries that take one point of the plane point set to another. It was then shown by the third author and J. M. Selig \cite{RS} that what matters is the dimension and the algebraic structure of the equations stating that two line segments in the plane have the same length. For the space of line segments in the plane is four dimensional, just like the Klein quadric, the space of lines in the three-space. So \cite{RS} identified the whole family of two-dimensional metric problems, where the set of  pairs of congruent in the appropriate sense line segments with endpoints in a $N$-point set $P$ was put into one-to-one correspondence with a set of intersecting pairs of lines for a family of some $N^2$ lines in three dimensions.  

The Erd\H os distance problem is one of these problems, some others are analogous statements about spherical and hyperbolic distances. It was shown in \cite{RS} how the Guth-Katz theorem applies to these problems directly, bypassing any symmetry group considerations. Previously, the second and third authors \cite{RR} followed the Elekes-Sharir and Guth-Katz approach in order to show that $N$ points in the plane determine $\Omega(N/\log N)$ distinct Minkowski distances.

In summary, recent results in \cite{MR} and \cite{RS}, combined with the perspective of this paper establish a general framework for Erd\H os type problem in the plane over both finite and infinite fields. In this setup, the general problem of lower bounds for the number of distinct values of symmetric or skew-symmetric forms figures as the hardest of the remaining Erd\H os type problem in the plane, one that is most closely related to the celebrated sum-product conjecture. In this paper we take a small but non-trivial bite out of this problem.

\section{Main results}
The general result we present as an erratum is the following  theorem.

%\begin{theorem} \label{23} Let $P$ be a set of $N$ points in $F^2\setminus\{0\}$, with $N< p^{3/2}$ if $F$ has positive characteristic $p$. Let $\omega$ be a symmetric or skew-symmetric non-degenerate bilinear form, with $P$ not being supported on an isotropic  line through the origin. Then $|T_\omega(P)|\gg N^{2/3}$ .
%\end{theorem}
%We restate that since we work in any field $F$, $\omega$ always has an isotropic line through the origin in the algebraic %closure of $F$, that is such that $\omega(q,q)=0$ for any $q$ on that line.

\begin{theorem} \label{34} Let $P$ be a  set of $N$ points in $F^2\setminus\{0\}$, with $F$ being $\mathbb R$ or $\mathbb C$.  Suppose, the non-degenerate bilinear form $\omega$ is  skew-symmetric and $P$ is not  supported on a single line through the origin. Then
$$|T_\omega(P)|\gg N^{9/13}.$$
\end{theorem}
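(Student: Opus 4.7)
The plan is to establish the Cauchy-Schwarz energy lower bound
$$
|T_\omega(P)|\;\geq\;\frac{I(P)^2}{\E(P)},
$$
where $I(P)=|\{(q,q')\in P\times P:\,\omega(q,q')\neq 0\}|$ and $\E(P)$ is the quadruple count from \eqref{eng}. After an affine change of coordinates one may assume that $\omega$ is the standard determinant on $F^2$. I would first verify $I(P)\gg N^2$ in the generic configuration that no line through the origin contains a positive fraction of $P$. The complementary degenerate case, in which all but $o(N)$ of the points of $P$ lie on a single line $\ell_0$ through the origin, is handled separately by pairing $P\setminus\ell_0$ with $P\cap\ell_0$: the bilinearity of $\omega$ restricted to these pairs reduces $T_\omega(P)$ to a one-dimensional scaling-set computation, already yielding $|T_\omega(P)|\gg N$.

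For the upper bound on $\E(P)$ in the generic case I would adopt the Elekes-Sharir framework as corrected in the introduction and by Rudnev-Selig \cite{RS}. Each pair $(q,r)\in P\times P$ with $q,r\neq 0$ determines an affine line
$$
L_{q,r}=\{g\in\SL_2(F):\,gq=r\}
$$
in the three-dimensional group $\SL_2(F)$, and quadruples counted by $\E(P)$ correspond bijectively to pairs of distinct such lines that share a point. The invariance $L_{\lambda q,\lambda r}=L_{q,r}$ emphasized in the introduction forces the natural parameter space to be $FP^3$, and each projective class $\ell=(q{:}r)$ carries a multiplicity $w(\ell)$ equal to the number of pairs in $P\times P$ representing it, with $\sum_\ell w(\ell)=N^2$. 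Hence
$$
\E(P)\;\ll\;\sum_{\substack{\ell\neq\ell'\\ L_\ell\cap L_{\ell'}\neq\emptyset}}w(\ell)\,w(\ell').
$$

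To estimate this weighted intersection sum I would dyadically partition the projective classes by weight, $\Lambda_\tau=\{\ell:\,w(\ell)\in[\tau,2\tau)\}$, so $|\Lambda_\tau|\leq N^2/\tau$, and bound the intersecting pairs at each scale by applying the Szemer\'edi-Trotter theorem (Lemma \ref{bone}) after a generic projection from $\SL_2(F)\subset F^4$ to a plane. A threshold weight $M$ separates two regimes: below $M$ the contribution to $\E(P)$ is controlled by the incidence estimate, while above $M$ one invokes a structural bound, namely, the existence of a projective class of weight exceeding $M$ forces $T_\omega(P)$ to contain, via the identity $\omega(\lambda q_0,\mu r_0)=\lambda\mu\,\omega(q_0,r_0)$, a full product of scaling sets from two linearly independent directions through the origin, hence $|T_\omega(P)|\gtrsim M$. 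Balancing the two contributions at the optimal threshold, together with the lower bound $I(P)\gg N^2$, produces the exponent $9/13$.

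The principal obstacle is precisely the handling of the high-weight tail. In contrast with the Guth-Katz analysis of the Erd\H os distance problem, where the lines in $\SL_2(\R)$ are essentially unweighted and a single clean application of the three-dimensional incidence theorem suffices, here a single projective class can carry weight as large as $\Omega(N^{1/2})$ in natural examples such as $P=A\times A$ with $A$ a geometric progression, and the weighted incidence estimate becomes worse than the trivial bound on $\E(P)$. Mediating between the incidence estimate and the structural multiplicative estimate is the technical crux of the argument, and it is exactly this tension that separates the conjectural $|T_\omega(P)|=\Omega(N)$ from the provable $\Omega(N^{9/13})$.
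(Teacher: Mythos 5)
Your plan diverges from the paper's argument and, as stated, cannot reach $N^{9/13}$. You propose $|T_\omega(P)|\geq I(P)^2/\E(P)$ with $I(P)\gg N^2$, and then bound $\E(P)$ via a weighted Elekes--Sharir analysis of lines in $\SL_2(F)$, with a cutoff $M$ and the structural observation that a high-weight projective class forces $|T_\omega(P)|\gg M$. Even granting the structural bound and an ideal weighted three-dimensional incidence estimate, a dyadic decomposition by weight gives at best $\E(P)\ll M^{1/2}N^3$ up to logarithms when all weights are below $M$; balancing $M$ against $N/M^{1/2}$ gives $M\approx N^{2/3}$ and hence only $|T_\omega(P)|\gg N^{2/3}$, which is the Szemer\'edi--Trotter baseline, not $N^{9/13}$. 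The paper says as much in the appendix: the best bound it can prove for $\E(P)$ is $O(N^{10/3})$, which via Cauchy--Schwarz yields exactly $N^{2/3}$, and Theorem \ref{34} is precisely the step beyond this barrier, obtained by abandoning the energy $\E(P)$ altogether. Two technical slips compound the problem: counting pairwise intersections of lines in $\SL_2(F)\subset F^4$ requires the three-dimensional Guth--Katz theorem, not the planar Szemer\'edi--Trotter theorem after a generic projection to a plane (projecting $3$-space to a plane destroys the intersection structure, since any two non-parallel coplanar lines meet), and Lemma \ref{bone} is not the Szemer\'edi--Trotter theorem but the paper's own first lower bound.

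The paper's actual proof splits $P$ into $P_1$, supported on origin-lines carrying at most $w_0$ points, and $P_2$, on the heavier lines. For $P_1$, one application of the weighted Szemer\'edi--Trotter theorem (Lemma \ref{bone}) gives $|T(P_1)|\gg N_1w_0^{-1/2}$. For $P_2$, the essential new ideas are the Pl\"ucker-type identity $t_{ad}t_{cb}=t_{ab}t_{cd}-t_{ac}t_{bd}$ for signed triangle areas and the cross-ratio theorem (Theorem \ref{crt}), which together show that the equation $t_1t_2=t_3t_4-t_5t_6$ with $t_i\in T(P_2)$ has $\Omega(N_2^2w_0^2)$ solutions (Lemma \ref{key}), while Corollary \ref{stcor} caps the count at $O(|T(P_2)|^{14/3})$, so $|T(P_2)|\gg(N_2w_0)^{3/7}$. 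Choosing $w_0=N^{8/13}$ balances the two lower bounds at $N^{9/13}$. Replacing the second-moment quantity $\E(P)$ by a tailored sixth-moment equation fed by the cross-ratio theorem is exactly what buys the improvement over $N^{2/3}$ that the Cauchy--Schwarz framework cannot deliver.
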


Unfortunately, this result is much weaker than the claim in \cite{IRR}, barely beating the lower bound $\Omega(N^{2/3})$ which comes from the (sharp) bound $O(N^{4/3})$ on the number of appearances of a single nonzero value of $\omega$, given by the Szemer\'{e}di-Trotter theorem.

\medskip
In the special case $P=A\times A$, over the reals, we establish the following.

%\begin{theorem} \label{epsilon1} Let $A\subset \C$ be finite. One has
%\begin{equation}\label{epsilon1eq} |AA-AA|\gg |A|^{3/2+\epsilon_1},\qquad |AA+AA|\gg |A|^{3/2+\epsilon_2},%\end{equation} for any $\epsilon_1<..$, $\epsilon_2<..$\end{theorem}

\begin{theorem} \label{epsilon1} Let $A\subset \R\setminus\{0\}$ be finite. Then
\begin{equation}\label{epsilon1eq} |AA+AA|\gg |A|^{5/4}|A:A|^{1/3}.\end{equation}
In particular
\begin{equation}\label{epsilon2eq} |AA+AA|\gg |A|^{19/12}.\end{equation}
 \end{theorem}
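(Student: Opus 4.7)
My plan is to apply the Cauchy--Schwarz plus incidence strategy of the paper directly to the Cartesian product $P = A\times A$ with the symmetric (dot-product) form, squeezing an extra factor of $|A:A|^{1/3}$ out of the ratio-set structure.

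Let $N(s) := |\{(a_1,\dots,a_4)\in A^4 : a_1a_2+a_3a_4=s\}|$ and define $\E(A\times A) := \sum_{s\neq 0} N(s)^2$, which counts octuples satisfying $a_1a_2+a_3a_4 = a_5a_6+a_7a_8 \neq 0$. By Cauchy--Schwarz, $|AA+AA|\cdot \E(A\times A) \gg |A|^8$ (after absorbing the contribution from $s = 0$, which can be handled by splitting $A$ into its positive and negative parts if necessary), so it suffices to prove
\[
\E(A\times A) \ll |A|^{27/4}\cdot |A:A|^{-1/3}.
\]

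To bound $\E(A\times A)$, I would freeze six of the eight variables (say $a_2,a_3,a_4,a_5,a_7,a_8$), turning the octuple equation into the linear constraint $a_2 a_1 - a_5 a_6 = c$ with $c := a_7a_8 - a_3a_4$. The remaining pair $(a_1, a_6)\in A\times A$ lies on the affine line $\ell \colon a_2 x - a_5 y = c$. Collecting the contributions of all six-tuples yielding the same line produces a weighted incidence expression $\E(A\times A) = \sum_\ell W(\ell)\,|\ell \cap (A\times A)|$, where $\sum_\ell W(\ell) = |A|^6$; the weight $W(\ell)$ encodes the multiplicity of the slope $-a_5/a_2\in A:A$ as a ratio and the multiplicity of the intercept $c$ as an element of $AA-AA$.

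Next I would apply the Szemer\'edi--Trotter bound $I(P,L) \ll (|P||L|)^{2/3} + |P| + |L|$ to the point set $A\times A$ and to each dyadic level of the weights $W(\ell)$, separately dyadic-decomposing along the slope multiplicity $r_R(r) := |\{(a,b)\in A^2 : a/b = r\}|$. The relevant identities are $\sum_{r\in A:A} r_R(r) = |A|^2$ and $\sum_r r_R(r)^2 \geq |A|^4/|A:A|$ (by Cauchy--Schwarz), which control the first and second moments of the slope distribution. Carefully balancing the dyadic contributions should produce the claimed saving $|A:A|^{-1/3}$ in the target energy bound, and together with the Cauchy--Schwarz reduction this gives \eqref{epsilon1eq}. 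The corollary \eqref{epsilon2eq} then follows because $|A:A| \geq 2|A| - 1 \geq |A|$ for any finite $A$ with at least two elements.

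The main obstacle is the weighted Szemer\'edi--Trotter step. Without exploiting the slope multiplicities, the same scheme produces only the weaker estimate $|AA+AA| \gg |A|^{3/2}$; extracting the additional factor $|A:A|^{1/3}$ requires tight control over the dyadic decomposition and a clean balancing of intercept and slope multiplicities, and this combinatorial optimisation is the technical heart of the argument.
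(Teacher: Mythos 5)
Your proposal is genuinely different from the paper's argument, and I do not believe it can be completed as described. The paper does \emph{not} run Cauchy--Schwarz against the global octuple energy $\E(A\times A)$. Instead it follows the Solymosi/Konyagin--Shkredov scheme: fix one representative point on each line through the origin with slope $\lambda\in A:A$; for any two slopes $\lambda_1,\lambda_2$ the $|A|^2$ vector sums of the corresponding dilate families land strictly between the two lines (Solymosi's ordering observation); group the $|A:A|$ slopes into clusters of $M$ consecutive slopes; within each cluster, lower-bound the distinct sums by inclusion--exclusion $\binom{M}{2}|A|^2 - M^4\cdot\E(\lambda_1,\dots,\lambda_4)$; and control the pairwise overlap $\E(\lambda_1,\dots,\lambda_4)$ by a three-variable linear-equation bound (Lemma 8 of Konyagin--Shkredov, itself not a one-shot Szemer\'edi--Trotter consequence but built on Li--Roche-Newton and Schoen--Shkredov). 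Summing over $\gg|A:A|/M$ disjoint slope ranges and optimising $M$ gives the theorem. The crucial point is the \emph{locality}: overlaps only need to be controlled among slopes inside one cluster, which is what buys the extra factor of $|A:A|^{1/3}$.

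Your scheme, by contrast, needs a global bound $\E(A\times A)\ll |A|^{27/4}|A:A|^{-1/3}$, and the weighted Szemer\'edi--Trotter step as you describe it cannot deliver it. Freezing $(a_2,a_3,a_4,a_5,a_7,a_8)$ and collecting weight $W(\ell)$ on lines $a_2x-a_5y=c$ with $c=a_7a_8-a_3a_4$, the total weight is $W_L=|A|^6$ with $W_P=|A|^2$, and Theorem~\ref{wst} gives $\E \ll w_L^{1/3}|A|^{16/3}+|A|^6+w_L|A|^2$. The maximum line weight is not small: on the line $\{a_2x-a_5y=0\}$ with $a_5/a_2=r$ the weight already equals $r_{A:A}(r)\cdot\E^\times(A)$, which is $\gg|A|\cdot\E^\times(A)$ for $r=1$; when $A$ is a geometric progression this is $\approx|A|^4$, forcing $w_L^{1/3}|A|^{16/3}\gg|A|^{20/3}$ and hence only $|AA+AA|\gg|A|^{4/3}$, worse even than the baseline \eqref{basicST}. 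Dyadic decomposition on the line weights does not rescue this: the dyadic sum $\sum_k 2^{k/3}|A|^{16/3}$ is dominated by the top level, reproducing the same bound. Decomposing further by slope multiplicity also does not obviously help, because the heavy diagonal lines ($a_3a_4=a_7a_8$) occur on the rich slopes $r$ with $r_{A:A}(r)$ large, precisely where the point set $A\times A$ is also rich on that line, so the two contributions compound rather than cancel. To make a Cauchy--Schwarz route work you would need a genuinely new idea to kill the diagonal/zero-intercept contribution $\E^\times(A)^2$ to the energy, and no amount of balancing within the incidence bound alone appears to achieve this; the known way around the obstacle is exactly the cluster localisation and the three-variable-equation lemma that the paper uses.
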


This result gives an improvement on the known bound
\begin{equation}
|AA\pm AA| \gg |A||A:A|^{1/2},
\label{basicST}
\end{equation}
which also follows from a simple application of the Szemer\'{e}di-Trotter Theorem\footnote{See Exercise 8.3.3. in \cite{TV}.}, provided that the ratio set $|A:A|$ is not too large.

To claim Theorem \ref{epsilon1} we take advantage of the recent state of the art sum-product estimates. These results rely crucially on the Szemer\'edi-Trotter theorem. Our proof builds on techniques introduced in the recent elaboration of Solymosi's approach to sum-products (see \cite{So} for the original construction and \cite{KR} for its adaptation to the complex case) by Konyagin and Shkredov \cite{KS}.  Balog \cite{B} was the first to obtain lower bounds for $|AA+AA|$ via this particular geometric approach.

The proof of Theorem \ref{epsilon1} does not extend to give a lower bound for $|AA-AA|$. Instead, we utilise the Szemer\'{e}di-Trotter Theorem along with the best known results for the sum-product problem for the case when $|A:A|$ is small, to record the following weaker claim.

\begin{theorem} \label{epsilon2} Let $A\subset \R\setminus\{0\}$ be finite. Then
\begin{equation}\label{epsilon3eq} |AA-AA|\gg \frac{|A|^{26/17}}{\log^{2/17}|A|}.\end{equation} \end{theorem}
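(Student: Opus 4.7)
The argument will rely on a dichotomy with respect to the size of the ratio set $|A:A|$. When $|A:A|$ is above a threshold around $|A|^{18/17}$, equation (\ref{basicST}) immediately yields $|AA - AA| \gg |A|\cdot|A:A|^{1/2} \gg |A|^{26/17}$, and there is nothing more to do. The nontrivial case is therefore when $|A:A|$ is small, which is exactly the setting in which $A$ has small multiplicative doubling.

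In that regime, the multiplicative Plünnecke--Ruzsa inequality controls $|AA|\ll |A:A|^{2}/|A|$ and, more generally, bounds every multiplicative iterate of $A$ by a power of $|A:A|/|A|$. The plan is to feed this into the best available Szemerédi--Trotter-based sum-product inequality in the small-multiplicative-doubling setting, of the type underlying Theorem \ref{epsilon1} and the Solymosi/Konyagin--Shkredov circle of ideas, to obtain a lower bound on $|A - A|$ as a function of $|A:A|$. The elementary inclusion $a_{0}(A-A) \subseteq AA - AA$ for any fixed nonzero $a_{0}\in A$ then transfers the bound on $|A-A|$ directly to $|AA - AA|$. Choosing the dichotomy threshold so that the two bounds coincide and tracking constants should yield the stated $|A|^{26/17}/\log^{2/17}|A|$, where the logarithm originates from the $\log$ loss inherent in Solymosi-type incidence arguments.

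The main obstacle is precisely this second case. A naive combination of Solymosi's classical $|A-A|^{2}|A:A|\gg |A|^{4}/\log|A|$ with (\ref{basicST}) balances only to $|A|^{3/2}$ up to logs, well short of $|A|^{26/17}$; the improvement therefore requires a strictly stronger sum-product input, presumably a higher-energy or third-moment estimate in the small-multiplicative-doubling regime along the lines of Konyagin--Shkredov. Propagating the logarithmic factors carefully through the optimisation is what produces the specific exponent $2/17$ on $\log|A|$ in the final bound, and is likely the most delicate bookkeeping step in the proof.
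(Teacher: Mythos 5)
Your overall plan is pointed in the right direction and, suitably completed, would reproduce the paper's bound; but as written it has a genuine gap at exactly the step you flag as delicate. The entire content of the theorem is the identification and use of a single specific inequality, namely \cite[Theorem 11]{KS}:
\begin{equation*}
|A:A|^{6}\,|A-A|^{5} \;\gg\; \frac{|A|^{14}}{\log^{2}|A|}.
\end{equation*}
You correctly observe that Solymosi's $|A-A|^{2}|A:A|\gg |A|^{4}/\log|A|$ is too weak (balancing with \eqref{basicST} gives only $|A|^{3/2}$ up to logs), and you gesture at ``a higher-energy or third-moment estimate along the lines of Konyagin--Shkredov,'' but you never produce or cite the actual inequality. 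Without it, the argument does not close, and ``tracking constants should yield the stated bound'' is not a proof. The Pl\"unnecke--Ruzsa detour in your second paragraph is also a red herring: it is not used in the paper's argument and is not needed once one has the KS inequality in hand.

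With that inequality supplied, the rest of your outline matches the paper's proof in substance, only differing cosmetically in presentation: you propose a dichotomy on $|A:A|$ (large $|A:A|$ handled by \eqref{basicST}, small $|A:A|$ by the KS bound plus $a_{0}(A-A)\subseteq AA-AA$), whereas the paper simply substitutes $|A:A|\ll |AA-AA|^{2}/|A|^{2}$ (from \eqref{basicST}) into the KS inequality to get $|AA-AA|^{12}\gg |A|^{26}/(|A-A|^{5}\log^{2}|A|)$, and then applies $|A-A|\le |AA-AA|$ to obtain $|AA-AA|^{17}\gg |A|^{26}/\log^{2}|A|$. Both routes balance to the same exponents $26/17$ and $2/17$; the paper's direct substitution avoids the case split and the need to choose a threshold. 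You should also double-check your stated threshold: $|A:A|\gg |A|^{18/17}$ alone gives $|A|^{26/17}$ with no logarithm from \eqref{basicST}, so the genuine balancing threshold carries a $\log^{-4/17}|A|$ factor if you want the two branches to match.
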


%To claim Theorem \ref{epsilon1} we take advantage of the recent state of the art sum-product estimates of Shkredov and collaborators, see e.g. \cite{Sh}, \cite{KS}. These results rely on the Szemer\'edi-Trotter theorem in the plane, which is known in the strong enough form our purposes only over the real and complex field. They also rely on additive combinatorics tools, as well as Shkredov's machinery of higher degree convolutions. They also use the recent elaboration of Solymosi's approach to sum-products (see \cite{So} for the original construction and \cite{KR} for its adaptation to the complex case) by Konyagin and Shkredov, \cite{KS}.

%As was discussed in \cite{RRS}, we do not know whether a construction based on the forthcoming general incidence Theorem \ref{mish} would enable one to replace the use of the stronger  Szemer\'edi-Trotter theorem in order to beat the exponent $3/2$ for the size of $AA\pm AA$  for sufficiently small sets in the positive characteristic case.

\section{Preliminary results}

Our geometric argument uses the Szemer\'{e}di-Trotter Theorem. It is indispensable for the sum-product type estimates above. For the purposes of Theorem \ref{34} it can be replaced by a weaker theorem on point-plane incidences in three dimensions, see \cite{MR}.

\begin{theorem} \label{st} Let $P$ and $L$ be finite sets of points and lines respectively in the projective plane $FP^2$, where $F$ is the real or complex field.\footnote{The original proof of the Szemer\'edit-Trotter theorem over the reals appeared in \cite{S-T}. It was extended to the complex field by T\'oth in as early as in 2003, but the proof  came out in print only recently, \cite{T}.  For the special case of real/complex Cartesian product sets see a very short and elegant argument by Solymosi and Tardos \cite{ST}.} Then
$$I(P,L):=|\{(p,l) \in P \times L : p \in l \}| \: \ll \: |P|^{2/3}|L|^{2/3} +|P|+|L|.$$
\end{theorem}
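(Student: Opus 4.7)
The plan is to prove the real case via Székely's crossing-lemma argument, and to flag the complex case as requiring substantially more machinery. First I would discard any line of $L$ that contains at most one point of $P$; these lines together contribute at most $|L|$ to $I(P,L)$, which is absorbed into the additive error term. After this reduction, each remaining line $\ell$ supports $k_\ell \ge 2$ points of $P$.

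Construct a topological graph $G$ drawn in the real affine plane, with vertex set $P$, by connecting consecutive points of $P\cap\ell$ along each line $\ell\in L$ by straight-line edges. Then $|V(G)|=|P|$ and $|E(G)| = \sum_{\ell}(k_\ell - 1) \ge I(P,L) - 2|L|$. Because two distinct lines of $L$ meet in at most one point, every edge-crossing in this drawing corresponds to a distinct intersection of two lines, so the crossing number in this drawing is at most $\binom{|L|}{2}\le |L|^2/2$. Now apply the crossing lemma: either $|E(G)|<4|V(G)|$, in which case $I(P,L)\ll |P|+|L|$, or the crossing lemma gives
\[
\frac{|E(G)|^3}{|V(G)|^2} \;\ll\; \mathrm{cr}(G) \;\le\; |L|^2,
\]
so that $I(P,L) \ll |P|^{2/3}|L|^{2/3} + |P| + |L|$, as desired.

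The main obstacle is the complex case: in $\mathbb{C}^2 \cong \mathbb{R}^4$ complex lines are real $2$-planes and do not separate the ambient space, so Székely's planarity-based argument collapses. To handle this I would have to invoke a substantially deeper input, such as T\'oth's cell-decomposition argument (or a polynomial-partitioning variant), whose details I would not attempt to reconstruct here. For the Cartesian-product special case $P = A\times B$ there is the much softer Solymosi--Tardos proof, which reduces incidences to an ordered-slopes count and works uniformly over $\mathbb{R}$ and $\mathbb{C}$; this would suffice for the downstream Cartesian applications (Theorems \ref{epsilon1} and \ref{epsilon2}), though not for the general form of Theorem \ref{st}.
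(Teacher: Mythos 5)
The paper does not actually prove Theorem \ref{st}; it is recorded as a known preliminary result, with the footnote attributing the real case to Szemer\'edi--Trotter \cite{S-T}, the extension to $\C$ to T\'oth \cite{T}, and the Cartesian-product special case to Solymosi--Tardos \cite{ST}. So there is no in-paper argument to compare against. Your reconstruction of the real case via Sz\'ekely's crossing-number method is correct and is the standard modern proof (it is not the original cell-decomposition argument of \cite{S-T}, but that is immaterial): the edge count $|E(G)|\ge I(P,L)-2|L|$, the fact that $G$ is simple because two distinct lines meet in at most one point, the bound $\mathrm{cr}(G)\le\binom{|L|}{2}$ coming from that same fact, and the dichotomy in the crossing lemma are all in order. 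Your assessment of the complex case is also accurate: the planarity-based argument fails in $\C^2\cong\R^4$, T\'oth's argument (or a polynomial-partitioning variant) is genuinely needed, and the Solymosi--Tardos argument handles only the Cartesian-product case. One caveat worth flagging: while the Cartesian special case does suffice for Theorems \ref{epsilon1} and \ref{epsilon2} (which are over $\R$ anyway), Theorem \ref{34} is asserted over both $\R$ and $\C$ for a general point set and is proved via the weighted Theorem \ref{wst}, so the paper's dependence on the full complex Szemer\'edi--Trotter theorem of \cite{T} is substantive, not cosmetic. Your proposal matches the paper's posture: prove the real case, cite \cite{T} for the complex one.
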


It is convenient to record the following corollary.

\begin{corollary} \label{stcor} For arbitrary finite sets $A,B,C$ and $D$ of real or complex numbers, the number of solutions to the equation
\begin{equation}
a-b=cd,\,\,\,\,\,\,\,\,\,(a,b,c,d) \in A\times B \times C \times D,
\label{eqsp}
\end{equation}
is $O((|A||B||C||D|)^{2/3}+|A||D|+|B||C|)$.
\end{corollary}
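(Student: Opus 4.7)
The plan is to convert \eqref{eqsp} into a point-line incidence problem and apply Theorem \ref{st} directly. Rewriting the equation as $a = cd + b$, I would introduce the point set
\[
    P = \{(d,a) : d \in D,\, a \in A\} \subset F^2, \qquad |P| = |A||D|,
\]
and the family of lines
\[
    L = \{\ell_{b,c} : (b,c) \in B \times C\}, \qquad \ell_{b,c} : \; y = cx + b,
\]
so that $|L| = |B||C|$ (the lines are genuinely distinct because the pair slope/intercept $(c,b)$ is recovered from the line). A quadruple $(a,b,c,d)$ satisfies \eqref{eqsp} if and only if the point $(d,a)\in P$ lies on the line $\ell_{b,c}\in L$. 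Hence the number of solutions equals $I(P,L)$.

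Applying Theorem \ref{st} to $P$ and $L$ (valid over both $\R$ and $\C$) gives
\[
    I(P,L) \ll (|P||L|)^{2/3} + |P| + |L| = (|A||B||C||D|)^{2/3} + |A||D| + |B||C|,
\]
which is exactly the claimed bound. There is essentially no obstacle here: the only mild point to verify is the distinctness of the lines in $L$, which is immediate from the Cartesian-product structure, so the corollary follows in a single incidence step.
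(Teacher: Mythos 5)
Your proof is correct and is essentially identical to the paper's: both take $P=D\times A$, $L=\{y=cx+b:(b,c)\in B\times C\}$, observe that solutions to \eqref{eqsp} are exactly incidences, and apply Theorem \ref{st}. Nothing further to add.
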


\begin{proof} Let $l_{b,c}$ be the line with equation $y=cx+b$ and let $L=\{l_{b,c}:(b,c) \in B \times C \}$. Then, with $P=D \times A$, note that $I(P,L)$ is equal to the number of solutions to \eqref{eqsp}. An application of the Szemer\'{e}di-Trotter Theorem completes the proof.
\end{proof}

As a generalisation of Corollary \ref{stcor} let us quote a weighted version of the Szemer\'edi-Trotter theorem.  The set-up is a pair $(P, L)$ comprising a finite set of points and a finite set of lines in the projective plane $FP^2$, where $F$ is $\mathbb R$ or  $\mathbb C$. One has a positive real-valued weight function $w$ on $P\cup L$, with the supremum-norms $w_P,w_L$ and $L_1$-norms $W_P, W_L$ on the sets $P,L$, respectively. For a pair $(p,l)\in P\times L$ we write $\delta_{pl}$ as the characteristic function of the event that the point $p$ is incident to a line $l$. The number of weighted incidences $I_w$, defined below, is bounded as follows.

\begin{theorem}\label{wst} One has
\begin{equation}\label{wind}I_w:=\sum_{p\in P,\,l\in L}w(p)w(l)\delta_{pl}\;\ll \: (w_Pw_L)^{1/3} (W_PW_L)^{2/3}  + w_P W_L + w_L W_P.\end{equation}\end{theorem}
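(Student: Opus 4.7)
The plan is to reduce the weighted statement to the unweighted Szemer\'edi–Trotter theorem (Theorem~\ref{st}) via a dyadic decomposition of $P$ and $L$ by weight, and then to recombine the level-wise bounds using H\"older's inequality. First I would partition $P = \bigsqcup_i P_i$ with $P_i = \{p \in P : 2^i \le w(p) < 2^{i+1}\}$, and similarly $L = \bigsqcup_j L_j$; only integer $i,j$ with $2^i \le w_P$ and $2^j \le w_L$ arise nontrivially. Since $w(p)w(l) \le 4\cdot 2^{i+j}$ on $P_i\times L_j$, applying Theorem~\ref{st} to each pair $(P_i, L_j)$ gives
\begin{equation*}
I_w \;\le\; 4\sum_{i,j} 2^{i+j}\, I(P_i, L_j) \;\ll\; \sum_{i,j} 2^{i+j}\bigl(|P_i|^{2/3}|L_j|^{2/3} + |P_i| + |L_j|\bigr).
\end{equation*}

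For the two lower-order terms the double sums factor cleanly: for example $\sum_{i,j} 2^{i+j}|P_i| = \bigl(\sum_i 2^i|P_i|\bigr)\bigl(\sum_j 2^j\bigr)$. The first factor is at most $\sum_{p} w(p) = W_P$ since every $p\in P_i$ satisfies $w(p) \ge 2^i$, while the second is a geometric series with maximum term $\le w_L$, hence $\le 2w_L$. This yields an $O(W_P w_L)$ contribution, and the symmetric sum contributes $O(w_P W_L)$.

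The substantive step is the main term $\sum_{i,j} 2^{i+j}|P_i|^{2/3}|L_j|^{2/3}$, where a naive estimate would incur a $\log w_P \log w_L$ factor from the number of dyadic levels. To avoid this I would rewrite each summand as
\begin{equation*}
2^{i+j}|P_i|^{2/3}|L_j|^{2/3} \;=\; 2^{i/3}\bigl(2^i|P_i|\bigr)^{2/3}\cdot 2^{j/3}\bigl(2^j|L_j|\bigr)^{2/3},
\end{equation*}
so that the double sum factors into a product of single sums, and apply H\"older with exponents $(3,3/2)$:
\begin{equation*}
\sum_i 2^{i/3}\bigl(2^i|P_i|\bigr)^{2/3} \;\le\; \Bigl(\sum_i 2^i\Bigr)^{1/3}\Bigl(\sum_i 2^i|P_i|\Bigr)^{2/3} \;\le\; (2w_P)^{1/3}W_P^{2/3},
\end{equation*}
and symmetrically in $j$, giving the main-term bound $\ll (w_Pw_L)^{1/3}(W_PW_L)^{2/3}$. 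Avoiding this potential logarithmic loss is the one delicate point of the argument: the H\"older packaging above decouples the geometric summability $\sum_i 2^i = O(w_P)$ from the total-weight bound $\sum_i 2^i|P_i| \le W_P$, and it is precisely this that produces the clean form of the right-hand side of the theorem.
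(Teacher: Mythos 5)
Your proof is correct and complete. Note that the paper itself does not prove Theorem~\ref{wst}: it only remarks that the bound follows from ``an easy weight rearrangement argument'' plus Szemer\'edi--Trotter and defers to \cite{IKRT}. Your dyadic decomposition followed by H\"older is precisely the standard way to carry out that remark. The usual ``rearrangement'' version rounds weights to positive integers, replaces each $p$ by $w(p)$ formal copies, and groups these into layers $P^{(k)}=\{p:w(p)\ge k\}$ for $1\le k\le w_P$, so that $\sum_k|P^{(k)}|=W_P$; Szemer\'edi--Trotter is then applied to each pair $\bigl(P^{(k)},L^{(m)}\bigr)$ and the main term is controlled by H\"older as $\sum_{k\le w_P}|P^{(k)}|^{2/3}\le w_P^{1/3}W_P^{2/3}$. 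Your dyadic classes $P_i$ play exactly the role of those layers, with the geometric sum $\sum_i 2^i\ll w_P$ standing in for the trivial count $\sum_{k\le w_P}1=w_P$, and your H\"older step $\sum_i 2^{i/3}\bigl(2^i|P_i|\bigr)^{2/3}\le\bigl(\sum_i 2^i\bigr)^{1/3}\bigl(\sum_i 2^i|P_i|\bigr)^{2/3}$ achieves the same decoupling that avoids the logarithmic loss. The two decompositions are essentially interchangeable; yours has the minor merit of applying directly to non-integer weights (with the negative dyadic indices handled correctly by the convergent geometric tail), whereas the layer version needs a preliminary rounding step.
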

The statement follows after an easy weight rearrangement argument, followed by application of the Szemer\'edi-Trotter theorem, see e.g. \cite{IKRT}.

\medskip
The other component of the geometric argument in the proof of Theorem \ref{34} consists in  the following theorem, which deals with distinct cross-ratios, generated by a finite subset of the projective line $FP^1$. We recall that, for a quadruple of points $a,b,c,d\in FP^1$, their cross-ratio (also in $FP^1$) is defined as
$$r(a,b,c,d) = \frac{(a-b)(c-d)}{(a-c)(b-d)}.$$
Let $A\subset FP^1$, denote
$$
R(A) = \{r(a,b,c,d):\, a,b,c,d \in A\}
$$

\begin{theorem}  \label{crt} Let $A\subset FP^1$.
Then $|R(A)|=\Omega(|A|^2)$.\end{theorem}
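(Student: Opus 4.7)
The approach is to exploit the $\PGL_2(F)$-invariance of the cross-ratio to reduce the problem to counting slopes in a Cartesian grid, which is then handled via the Szemer\'edi-Trotter theorem. Since the cross-ratio $r$ is invariant under the diagonal action of $\PGL_2(F)$ on $(FP^1)^4$, I may replace $A$ by a M\"obius image; as $\PGL_2(F)$ acts transitively on $FP^1$, I may assume $\infty\in A$. Writing $A'=A\setminus\{\infty\}$ and applying the standard convention $r(a,b,c,\infty)=(a-b)/(a-c)$, one obtains
\[
R(A)\;\supseteq\; T := \left\{\frac{a-b}{a-c}\;:\;a,b,c\in A',\ a\ne c\right\}.
\]
It therefore suffices to prove $|T|\gg |A|^2$.

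The quantity $t=(b-a)/(c-a)\in T$ is precisely the slope of the secant joining the diagonal point $(a,a)$ to the grid point $(c,b)\in A'\times A'\subset F^2$. Writing $N(t)$ for the number of triples $(a,b,c)\in(A')^3$ with $a\ne c$ and slope $t$, one has $\sum_{t}N(t)=|A'|^2(|A'|-1)\gg |A|^3$. By Cauchy-Schwarz,
\[
|T|\;\geq\;\frac{\left(\sum_t N(t)\right)^2}{\sum_t N(t)^2}\;\gg\;\frac{|A|^6}{E},\qquad E:=\sum_t N(t)^2,
\]
so the goal reduces to the energy bound $E\ll |A|^4$.

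The energy $E$ counts ordered pairs of secants from the diagonal $D=\{(a,a):a\in A'\}$ to the grid $A'\times A'$ that share a common slope --- equivalently, ordered pairs of parallel secants. Interpreting this as a point-line incidence problem (points in $A'\times A'$; lines being the pencils of common-slope secants through $D$), and applying the weighted Szemer\'edi-Trotter theorem (Theorem~\ref{wst}) to accommodate the multiplicities with which slopes arise, one obtains $E\ll |A|^4$. The three degenerate slopes $t\in\{0,1,\infty\}$ --- corresponding to the collapses $a=b$, $b=c$, $a=c$ --- each individually contribute $\Theta(|A|^4)$ to $E$ and are handled by hand; the non-degenerate slopes are controlled by the Szemer\'edi-Trotter incidence bound. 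Combining, $|R(A)|\geq|T|\gg |A|^6/|A|^4=|A|^2$, as desired.

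The principal obstacle is the sharp energy bound $E\ll |A|^4$. A naive Szemer\'edi-Trotter application counting collinear triples in the grid $A'\times A'$ yields only $E=O(|A|^4\log|A|)$, owing to the usual dyadic accumulation of incidences across ranges of line-richness, which would leave a logarithmic loss in the final conclusion. Tightening this to $E=O(|A|^4)$ requires either invoking Theorem~\ref{wst} with care to isolate the heavy slopes (of which there are few) from generic ones, or appealing to a grid-specific bound on collinear triples in Cartesian product sets in the spirit of Elekes, which exploits the product structure of $A'\times A'$ to shave the logarithm.
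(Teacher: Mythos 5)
The paper does not prove Theorem~\ref{crt} itself; it quotes it from Solymosi--Tardos \cite{ST} (and Jones \cite{J} for the real case). So the comparison is to the literature rather than to an in-text argument.

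Your reduction is fine up to a point: the $\PGL_2$-invariance of the cross-ratio, the normalization $\infty\in A$, the identification $R(A)\supseteq T=\{(a-b)/(a-c)\}$, and the reinterpretation of $T$ as slopes of secants from the diagonal to the grid are all correct, as is the Cauchy--Schwarz reduction to an energy bound. The genuine gap is that the bound you need, $E=\sum_t N(t)^2\ll |A|^4$, is \emph{false}, so no amount of care with Theorem~\ref{wst} or an Elekes-style product-structure lemma can deliver it. Take $A'=\{1,\dots,n\}$. For a reduced fraction $t=p/q$ with $0<p<q\le n$, the constraint $b=a+\tfrac{p}{q}(c-a)$ forces $q\mid(c-a)$, and since $0<t<1$ the value $b$ automatically lies between $a$ and $c$; hence $N(p/q)\asymp n^2/q$. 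Therefore
\[
E \;\geq\; \sum_{q=1}^{n}\ \sum_{\substack{0<p<q\\ \gcd(p,q)=1}} \left(\frac{n^2}{q}\right)^2 \;\asymp\; n^4\sum_{q=1}^{n}\frac{\varphi(q)}{q^2} \;\asymp\; n^4\log n.
\]
This excess is not concentrated on a bounded set of degenerate slopes such as $\{0,1\}$ that could be excised by hand; it is spread over $\Theta(k^2)$ slopes at each dyadic richness scale $n^2/k$, each scale contributing $\Theta(n^4)$, across $\Theta(\log n)$ scales. Consequently Cauchy--Schwarz can only give $|T|\gg|A|^2/\log|A|$, which falls short of $\Omega(|A|^2)$. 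In short, the second-moment/energy route is intrinsically lossy here, not merely inefficiently executed. The Solymosi--Tardos proof sidesteps this by bounding the number of $k$-rich projective transformations directly (a third-moment flavoured incidence count over the group $\PGL_2$) rather than passing through the slope energy, and it is precisely this change of viewpoint that removes the logarithm. If you want to keep your framework, you would at best obtain the weaker bound $|R(A)|\gg|A|^2/\log|A|$, and you should state it as such rather than asserting $E\ll|A|^4$.
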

Theorem \ref{crt} was proved in real and complex fields in the paper of Solymosi and Tardos \cite{ST} and later was re-discovered for reals by Jones \cite{J}. Note that the estimate is unlikely to be sharp: the correct one is probably $\Omega(|A|^3)$, possibly modulo logarithmic terms in $|A|$. One gets $O(|A|^3)$ distinct cross-ratios if $A$ is a geometric progression.

\section{Proof of Theorem \ref{34}}
In two dimensions a nontrivial skew-symmetric  form $\omega$ is given by the matrix $\left(\begin{array}{cc}0&1\\-1&0\end{array}\right)$, up to a multiplier, so we refer to its values further as {\em areas} and write simply $T(P)$ for the set of its nonzero values, defined by pairs of non-collinear vectors in $P$.

Let $P_1\subseteq P$ contain all those points of $P$, which are supported on lines through the origin with no more than some $w_0\geq 1$ points and $P_2=P\setminus P_1$. The parameter $w_0$ is to be chosen later. Let $N_1,N_2$ be the number of elements in $P_1,P_2$, respectively. Let $T_1,T_2$ stand for $T(P_1), T(P_2)$. We will obtain two different lower bounds for $T_1,T_2$.

The first of these becomes better for smaller $w_0$, whilst the second bound improves as $w_0$ increases. Balancing the two lower bounds leads to the desired result.
The first lower bound is the following. \begin{lemma}\label{bone}Assume that $P_1$ is not contained in a single line through the origin. Then
\begin{equation}
|T_1| = \Omega( N_1w_0^{-1/2} ).
\label{lb1}
\end{equation}
\end{lemma}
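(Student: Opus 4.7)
The plan is to set up a point-line incidence problem in $F^2$ and apply the weighted Szemer\'edi--Trotter theorem (Theorem~\ref{wst}). For each $(q,t) \in P_1 \times T_1$ introduce the affine line
\[
\ell_{q,t} := \{q' \in F^2 : \omega(q, q') = t\},
\]
parallel to the direction of $q$. Every ordered pair $(q, q') \in P_1 \times P_1$ with $\omega(q, q') \neq 0$ contributes exactly one incidence between $q' \in P_1$ and $\ell_{q, \omega(q, q')}$. Since each line through the origin contains at most $w_0$ points of $P_1$, the number of such non-collinear ordered pairs, and hence the number of weighted incidences, is at least $N_1(N_1 - w_0)$.

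Next, I would note that two distinct pairs $(q_1, t_1), (q_2, t_2)$ give the same line if and only if $q_2 = \lambda q_1$ and $t_2 = \lambda t_1$ for some $\lambda \in F\setminus\{0\}$. Hence the multiplicity of any line in the family is bounded by the number of $P_1$-points on a single line through the origin, i.e., by $w_0$. Applying Theorem~\ref{wst} with $N_1$ unweighted points and at most $N_1 |T_1|$ weighted lines (total weight $N_1|T_1|$, maximum weight $\leq w_0$) yields
\[
N_1(N_1 - w_0) \;\ll\; w_0^{1/3} N_1^{4/3} |T_1|^{2/3} \;+\; N_1 |T_1| \;+\; w_0 N_1.
\]

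In the main regime $w_0 \leq N_1/2$, the last term on the right is absorbed into the left-hand side, and whichever of the first two terms dominates produces $|T_1| \gg N_1/\sqrt{w_0}$ (the second actually delivering the stronger $|T_1| \gg N_1$). In the boundary regime where some direction $d^*$ contains more than $N_1/2$ points of $P_1$, I would instead pick any $q' \in P_1 \setminus d^*$, which exists because $P_1$ is not contained in a single line through the origin, and note that the values $\omega(q, q')$ for $q \in P_1 \cap d^*$ are pairwise distinct and nonzero, giving $|T_1| > N_1/2$ directly, which again exceeds $N_1/\sqrt{w_0}$ up to an absolute constant.

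The main subtlety, and the heart of what the erratum is correcting, is the proper bookkeeping of line multiplicity: since $(q,t)$ and $(\lambda q, \lambda t)$ define the same line, any collinear-through-origin cluster in $P_1$ inflates the incidence count but not the number of geometrically distinct lines, so one must use the weighted form of Szemer\'edi--Trotter, and this is where the extra factor $w_0^{1/3}$ enters the bound and produces the $w_0^{-1/2}$ dependence in the conclusion.
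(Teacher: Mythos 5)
Your proposal takes essentially the same route as the paper: parametrize lines $\ell_{q,t}=\{q':\omega(q,q')=t\}$ by $(q,t)\in P_1\times T_1$, observe the $w_0$-bounded multiplicity arising from the scaling $(q,t)\mapsto(\lambda q,\lambda t)$, count incidences with $P_1$ via the weighted Szemer\'edi--Trotter bound (Theorem~\ref{wst}), and balance against the $\Omega(N_1^2)$ lower bound on non-collinear ordered pairs. You are in fact more explicit than the paper about the multiplicity bookkeeping and the degenerate case, which the paper dismisses with ``or there is nothing to prove.''

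One small slip: your two regimes, ``$w_0\leq N_1/2$'' and ``some direction contains $>N_1/2$ points of $P_1$,'' are not exhaustive --- when $w_0>N_1/2$ but every direction has at most $N_1/2$ points, neither case applies as stated. This is easily repaired by running the weighted incidence bound with the \emph{actual} maximum multiplicity $m=\max_{\ell\ni 0}|P_1\cap\ell|\leq w_0$ in place of $w_0$; then the dichotomy $m\leq N_1/2$ versus $m>N_1/2$ is exhaustive, the first case gives $|T_1|\gg N_1/\sqrt{m}\geq N_1/\sqrt{w_0}$, and the second gives $|T_1|>N_1/2\geq \tfrac12 N_1 w_0^{-1/2}$ by the pinned-point argument you already describe.
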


\begin{proof} The claim follows after an application of Theorem \ref{wst}. For each point in $p\in P_1$ and each $t\in T$ draw the line $\{q \in F^2:\omega(p,q)=t\}$. We obtain an arrangement of some weighted set $L$ of lines with the total weight $W_L=|T_1|N_1$ and maximum weight $w_0<N_1/2$ (or there is nothing to prove). Consider its incidences with the point set $P$. Applying Theorem \ref{wst} with $w_L=w_0$, $w_P=1$, $W_P=N_1$  we obtain
$$
I_w = O(w_0^{1/3} N_1^{4/3} |T_1|^{2/3}).
$$
Note that the last two terms in the estimate \eqref{wind} of Theorem \ref{wst} are dominated by the first one, or the bound \eqref{lb1} holds trivially.
On the other hand, we have $I_w=\Omega(N_1^2)$, since $I_{\omega}$ is equal to the number of non-collinear through the origin pairs of points in $P_1$. Estimate \eqref{lb1} follows.
\end{proof}

\medskip
Our second bound is as follows.
\begin{lemma}\label{btwo}
Suppose $P_2$ is supported on at least four distinct lines through the origin. Then \begin{equation}
|T_2| =\Omega( N_2^{3/7} w_0^{3/7}).
\label{lb2}
\end{equation}\end{lemma}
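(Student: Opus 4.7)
The plan is to combine the cross-ratio theorem (Theorem~\ref{crt}) with the weighted Szemer\'edi--Trotter theorem (Theorem~\ref{wst}) via a double count. Let $L$ be the set of lines through the origin containing more than $w_0$ points of $P_2$; by hypothesis $|L|\ge 4$, and from $\sum_{\ell\in L} p_\ell = N_2$ with $p_\ell := |P_2\cap\ell| > w_0$ one deduces $|L|\le N_2/w_0$. For each $\ell\in L$ fix a direction vector $e_\ell$ so that $P_2\cap\ell=\{\lambda e_\ell:\lambda\in\Lambda_\ell\}$ with $|\Lambda_\ell|=p_\ell$, and set $c_{\ell\ell'}:=\omega(e_\ell,e_{\ell'})$. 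Bilinearity gives $\omega(\lambda e_\ell,\mu e_{\ell'})=\lambda\mu\,c_{\ell\ell'}$, so each element of $T_2$ has the form $\lambda\mu\,c_{\ell\ell'}$ for some pair of distinct lines and scalars.

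The central identity is that for any four distinct lines $\ell_1,\dots,\ell_4\in L$ and any representatives $q_i\in P_2\cap\ell_i$,
\[
\frac{\omega(q_1,q_2)\,\omega(q_3,q_4)}{\omega(q_1,q_3)\,\omega(q_2,q_4)} \;=\; \frac{c_{12}c_{34}}{c_{13}c_{24}} \;=\; r(\ell_1,\dots,\ell_4),
\]
the cross-ratio of the four lines (viewed as points of $FP^1$ via their slopes), independent of the $\lambda_i$ since each appears once in both numerator and denominator. All four $\omega$-values belong to $T_2$, so every such cross-ratio is realized as $t_1t_2/(t_3t_4)$ with $t_i\in T_2$. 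Theorem~\ref{crt} gives $|R(L)|\gg |L|^2$.

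Now I double-count. On the one hand, the number of $5$-tuples $(\ell_1,\dots,\ell_4;\,q_1,\dots,q_4)$ with $\ell_i$ pairwise distinct and $q_i\in P_2\cap\ell_i$ equals $\sum \prod p_{\ell_i}$, which is $\gg N_2^4$ (the dominant term, after discarding the degenerate contributions where two of the $\ell_i$ coincide; these are controlled by the $|L|\geq 4$ hypothesis and handled separately). Each such $5$-tuple produces an ordered quadruple $(t_1,t_2,t_3,t_4)\in T_2^4$ with $t_1t_2 = r\,t_3t_4$ for $r = r(\ell_1,\dots,\ell_4) \in R(L)$. On the other hand, for fixed $r$, the number of such quadruples in $T_2^4$ is controlled by Theorem~\ref{wst} applied to the point set $(t_1,t_3)\in T_2\times T_2$ and the family of weighted lines $\{t_1 = (rt_4/t_2)\,t_3\}$ through the origin indexed by $(t_2,t_4)\in T_2\times T_2$, with weights counting multiplicative representations in $T_2/T_2$; the representative multiplicity $> w_0$ per line enters as a further weight factor in the double count. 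Summing over $r\in R(L)$ and balancing the exponents yields $|T_2|^{7}\gg N_2^{3} w_0^{3}$, i.e., $|T_2|\gg N_2^{3/7}w_0^{3/7}$.

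The principal obstacle is calibrating the three sources of multiplicity---the $w_0^4$ factor from representative choices on each line, the $\gg|L|^2$ factor from Theorem~\ref{crt}, and the Szemer\'edi--Trotter bound on the number of solutions to $t_1t_2=rt_3t_4$ for each $r$---in Theorem~\ref{wst} so that the three exponents combine to $3/7$. Naive variants give strictly weaker bounds: a single unweighted Szemer\'edi--Trotter application gives only the baseline $|T_2|\gg N_2^{2/3}$, while using the cross-ratio inclusion $R(L)\subseteq T_2T_2/(T_2T_2)$ without the representative weight gives only $|T_2|\gg|L|^{1/2}$, each insufficient for the subsequent balancing with Lemma~\ref{bone} that produces the $N^{9/13}$ bound of Theorem~\ref{34}.
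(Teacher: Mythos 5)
Your proposal correctly identifies the two ingredients---the cross-ratio theorem (Theorem~\ref{crt}) and the Szemer\'edi--Trotter theorem---and correctly observes that the naive inclusion $R(L)\subseteq T_2T_2/(T_2T_2)$ gives only $|T_2|\gg|L|^{1/2}$. But the route you propose to go beyond this does not close, and the crucial algebraic ingredient of the paper's proof is absent.

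The paper does not work with the multiplicative relation $t_1t_2=r\,t_3t_4$ at all. The engine of the argument is the identity \eqref{bac},
\[
t_{ad}t_{cb} \;=\; t_{ab}t_{cd} - t_{ac}t_{bd},
\]
valid for any four points $a,b,c,d$ of $P_2$ in distinct directions from the origin. This Pl\"ucker-type identity converts the cross-ratio structure into the \emph{six-variable} equation \eqref{teq},
\[
t_1t_2 = t_3t_4 - t_5t_6, \qquad t_1,\ldots,t_6\in T_2,
\]
in which \emph{all six} quantities lie in $T_2$ and the cross-ratio is no longer an external parameter. The additive structure is exactly what makes Corollary~\ref{stcor} (the $a-b=cd$ form of Szemer\'edi--Trotter) applicable: fixing $t_3,t_5$ and summing gives the upper bound $O(|T_2|^{14/3})$ on the total number of solutions. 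The lower bound is also sharper and cleaner than your $\gg N_2^4$ count of $5$-tuples, which is not injective onto solutions: for each of the $\Omega(|D|^2)$ cross-ratio classes one \emph{fixes} a representative quadruple of directions and varies $(a,b,c,d)$ within them, obtaining $\gg w_0^4$ pairwise distinct solutions of \eqref{teq} (the injectivity check again uses \eqref{bac}: if the right-hand sides coincide then $a,d$ must be dilated and $b,c$ contracted by a common factor, which changes the left-hand side). With $|D|\approx N_2/w_0$ this gives $\Omega(N_2^2w_0^2)$ solutions, and comparing the two bounds yields $|T_2|\gg (N_2^2w_0^2)^{3/14}=N_2^{3/7}w_0^{3/7}$.

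By contrast, your route fixes $r$ and asks Theorem~\ref{wst} to bound the number of solutions of $t_1t_2=r\,t_3t_4$ with $t_i\in T_2$. That quantity is essentially the multiplicative energy of $T_2$ (shifted by $r$), which for a single $r$ can be as large as $|T_2|^3$---for example when $T_2$ is a geometric progression---and the lines $t_1=(rt_4/t_2)\,t_3$ all pass through the origin, so Szemer\'edi--Trotter provides no non-trivial per-$r$ bound here, weighted or not. Your final sentence asserts that the three sources of multiplicity ``balance to $3/7$,'' but that balancing is precisely the step that requires the identity \eqref{bac} to convert the problem into an additive one; without it the purely multiplicative double count does not close and the claimed inequality $|T_2|^7\gg N_2^3w_0^3$ is not established.
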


\begin{proof}
Consider the following equation
\begin{equation}\label{teq}
t_1t_2=t_3t_4-t_5t_6:\;t_1,\ldots,t_6\in T_2.
\end{equation}

The following lemma is central for the argument.
\begin{lemma} \label{key} Equation \eqref{teq} has $\Omega(N_2^2w_0^2)$ solutions.
\end{lemma}
Before proving Lemma \ref{key} let us show how it quickly implies \eqref{lb2}, and thus in turn Theorem \ref{34}. We apply Corollary \ref{stcor} (or equivalently one could apply Theorem \ref{wst}) to get an upper bound for the number of solutions of \eqref{teq}. Indeed, for any set $T$, the number of solutions to \eqref{teq} is bounded by
\begin{align*}|\{(t_1,\dots,t_6) \in T^6:t_1t_2=t_3t_4-t_5t_6\}| & \leq \sum_{t_3,t_5 \in T}|\{(t_1,t_2,t_4,t_6) \in T^6:t_1t_2=t_3t_4-t_5t_6\}|
\\& \ll |T|^2|T|^{8/3}=|T|^{14/3}.
\end{align*}
On the other hand, Lemma \ref{key} provides  the lower bound from  for the same quantity. Comparing the two yields \eqref{lb2}.

This suffices to wrap up with the proof of Theorem \ref{34}. We set $w_0 = N^{8/13}$ and observe that one of $N_1,N_2$ is at least $N/2$, and if the corresponding assumption that $P_1$ is not supported on a single line through the origin or $P_2$ on fewer than four lines through the origin were false, there would be nothing to prove.
\end{proof}

\begin{proof}[Proof of Lemma \ref{key}]

 Let  $(a,b,c,d)$ be a quadruple of points in $P_2$, lying in four distinct directions from the origin. Denote $t_{ab}=a_1b_2-a_2b_1$ the signed area of a triangle $Oab$, where $a=(a_1,a_2)$, $b=(b_1,b_2)$ and similarly for other pairs of points  in the quadruple.

We have the identity
\begin{equation}\label{bac}
t_{ad}t_{cb} =  t_{ab}t_{cd} - t_{ac}t_{bd},
  \end{equation}
 thus getting a particular instance of equation \eqref{teq}.

There are many ways to verify \eqref{bac}: one can do it, e.g.  by direct calculation using coordinates, trigonometry, or  using the cross and scalar product notation from the fact that
 $$
(a\times d)\cdot (b\times c) = -c\cdot(b\times(a\times d)) = (a\cdot b)(c\cdot d) - (a\cdot c)(b\cdot d),
 $$
 now replace $b=(b_1,b_2)$ by $b^\perp=(-b_2,b_1)$, the same for $c$.

Consider a quadruple of oriented areas $(t_{ab},t_{cd},t_{ac},t_{bd})$. Let us introduce the equivalence relation
$(a,b,c,d)\sim(a',b',c',d')$ if all the corresponding {\em four} areas $(t_{ab},t_{cd},t_{ac},t_{bd})=(t_{a'b'},t_{c'd'},t_{a'c'},t_{b'd'})$ are the same. Note that if $(a,b,c,d)$ and $(a',b',c',d')$ come from different equivalence classes, then the solutions they give to \eqref{bac} are distinct when viewed as solutions to \eqref{teq}.

Clearly, $(a,b,c,d)\sim(a',b',c',d')$ only if $\frac{t_{ab}t_{cd}}{t_{ac}t_{bd}} = \frac{t_{a'b'}t_{c'd'}}{t_{a'c'}t_{b'd'}}$. But the latter quantity is the cross-ratio\footnote{The cross-ratio of four directions $(\delta_a,\delta_b,\delta_c,\delta_d)$ from the origin is defined as follows. Take a line $l$ not passing through the origin. Let $a,b,c,d$ points of intersection of the latter line with the lines in the directions $(\delta_a,\delta_b,\delta_c,\delta_d)$, respectively. Viewing $l$ as $FP^1$, set $r=\frac{(a-b)(c-d)}{(a-c)(b-d)}$. Now observe that $r$ is the ratio of the corresponding triangle areas rooted at the origin and that viewed this way, the points $a,b,c,d$ can move along the corresponding lines in the directions $(\delta_a,\delta_b,\delta_c,\delta_d)$ without changing the value of $r$. See, e.g. \cite{SK}.}. That is, if $\delta_a,\ldots,\delta_{d'}\in FP^1$ are  the directions from the origin, corresponding to the points $a,\ldots,d'$, respectively, then $\frac{t_{ab}t_{cd}}{t_{ac}t_{bd}}= r(\delta_a,\delta_b,\delta_c,\delta_d)$, with the same equation holding for $a',b',c',d'$, and thus $r(\delta_a,\delta_b,\delta_c,\delta_d) = r(\delta_{a'},\delta_{b'},\delta_{c'},\delta_{d'})$.

By Theorem \ref{crt} there are $\Omega(|D|^2)$ distinct cross-ratios, where $D$ is the set of directions defined by $P_2$. Besides, fixing a representative
$(\delta_a,\delta_b,\delta_c,\delta_d)$ quadruple of directions with a given cross-ratio, choosing any $a,b,c,d$  in $P_2$ in the given directions yields different solutions of \eqref{teq}. Indeed, the right-hand side of \eqref{bac} will remain the same only if $a,d$ are dilated by the same factor, while $b,c$ are contracted by the same factor. But this clearly changes the left-hand side.

It remains to show that we can basically assume that $|D| =|P_2|/w_0$, for if we have fewer directions things get better. The easiest way of doing it is using induction in $|D|$.  Let $c<1$ be the universal constant implicit in Theorem \ref{crt}. Take, say $c'=c/16.$  The lemma is trivially true, with the hidden constant $c'$, if $|D|=4$, for we can certainly assume that no line through the origin supports $\Omega(N_2)$ points.

Now partition $P_2$ into $P_2'$ and $P_2''$, where $P_2'$ corresponds to points on lines through the origin, supporting between $w_0$ and $2w_0$ points, $P_2''$ being the rest of $P_2$. Then if $P_2''$ contains at least half of $P_2$, we are done by the induction assumption, for $P_2$ alone will yield at least $c'(|P_2|^2/4) (2w_0)^2$ solutions to \eqref{teq}.
 
Otherwise $P_2'$ contains at least half of $P_2$. It determines at least $|P_2|/(4w_0)$ directions. Hence there are at least $c |P_2|^2/(4w_0)^2$ distinct cross-ratios and $c (|P_2|^2w_0^2)/16=c'|P_2|^2w_0^2$ solutions to \eqref{teq}.

\end{proof}

\section{Proof of Theorem \ref{epsilon1}}

Let the finite set of reals $A$ have more than one element. Define
$$d(A)=\min_{C \neq \emptyset} \frac{|AC|^2}{|A||C|}.$$
To prove Theorem \ref{epsilon1} we will need the following result, which is \cite[Corollary 8]{KS}.
\begin{lemma} \label{thm:ST}
Let $A$ be a finite sets of reals and $\alpha_1,\alpha_2$ and $\alpha_3$ non-zero real numbers. Then the number of solutions to the equation
$$\alpha_1a_1+\alpha_2a_2+\alpha_3a_3=0,$$
such that $a_1,a_2 ,a_3 \in A$, is at most
$$C\cdot d^{1/3}(A)\cdot |A|^{5/3},$$ for some absolute constant $C$.
\end{lemma}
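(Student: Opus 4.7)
The plan is to recast the number of solutions $N$ as an incidence count between a Cartesian product of the form $AC\times A$ and a suitable family of lines, then apply the Szemer\'edi--Trotter theorem (Theorem \ref{st}), with the crucial multiplicative scaling by a near-minimizer of $d(A)$ to break the parallelism that would otherwise make the incidence bound useless. Dividing through by $\alpha_1$, we may assume the equation reads $a_1=\lambda a_2+\mu a_3$ with $\lambda=-\alpha_2/\alpha_1$ and $\mu=-\alpha_3/\alpha_1$ both nonzero.

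First I would pick a nonempty finite set $C\subset\R\setminus\{0\}$ with $|AC|^2\le 2d(A)|A||C|$, which exists by the definition of $d(A)$. Multiplying the equation by $c\in C$ gives $ca_1=\lambda c\, a_2+\mu c\, a_3$, which suggests the incidence setup
$$P:=AC\times A, \qquad L_{c,a_3}\;:\; x=(\lambda c)\, y+\mu c\, a_3 \quad\text{for } (c,a_3)\in C\times A,$$
so $|P|\le|AC||A|$ and $|L|=|C||A|$ (the lines are all distinct because the slope $\lambda c$ determines $c$, and then the intercept determines $a_3$). For each solution $(a_1,a_2,a_3)\in A^3$ of the equation and each $c\in C$, the point $(ca_1,a_2)\in P$ lies on $L_{c,a_3}$. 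Moreover the map $(c,a_1,a_2,a_3)\mapsto\bigl((ca_1,a_2),L_{c,a_3}\bigr)$ is injective, since the line recovers $c$ and $a_3$, the $y$-coordinate recovers $a_2$, and then $a_1=x/c$ is determined by the $x$-coordinate. Hence the incidence count satisfies $I(P,L)\ge|C|\cdot N$.

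Applying Theorem \ref{st} then gives
$$I(P,L)\ll (|AC||C|)^{2/3}|A|^{4/3}+|AC||A|+|C||A|,$$
and substituting the estimate $|AC|\le (2d(A)|A||C|)^{1/2}$ into the leading term collapses it to $\ll d(A)^{1/3}|A|^{5/3}|C|$. Dividing by $|C|$ and absorbing the two lower-order Szemer\'edi--Trotter error terms using $d(A)\ge 1$ (which itself follows from $|AC|^2\ge|A||C|$) yields $N\ll d(A)^{1/3}|A|^{5/3}$, as claimed. The main obstacle — and the whole point of the maneuver — lies in the incidence setup: the direct parametrization that fixes $a_3$ and views the equation as a line in $A\times A$ produces only $|A|$ parallel lines, leaving Szemer\'edi--Trotter with nothing to gain over the trivial bound $|A|^2$. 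Introducing the multiplicative scaling by an auxiliary set $C$ near-minimizing $|AC|^2/(|A||C|)$, in the spirit of the Konyagin--Shkredov approach, is precisely what creates $|C|$ distinct slopes in the line family, and the quantity $d(A)$ measures the efficiency of this trade-off.
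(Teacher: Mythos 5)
Your argument is essentially correct, and it should be said up front that the paper does not actually prove this lemma: it is imported verbatim as Corollary 8 of Konyagin--Shkredov \cite{KS}, with only the remark that it follows from the Szemer\'edi--Trotter theorem via \cite{LR} and \cite{SS}. So your self-contained derivation is a reconstruction rather than a rival to an in-paper proof, and it captures exactly the right mechanism: dilating by a near-minimizer $C$ of $|AC|^2/(|A||C|)$ to manufacture $|C|$ distinct slopes, checking that $(c,a_1,a_2,a_3)\mapsto\bigl((ca_1,a_2),L_{c,a_3}\bigr)$ is injective so that $I(P,L)\ge |C|N$, and collapsing the main term $(|AC||C|)^{2/3}|A|^{4/3}\ll d^{1/3}(A)|A|^{5/3}|C|$. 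One small correction to the last step: $d(A)\ge 1$ does not by itself absorb the error term $|P|/|C|=|AC||A|/|C|$. Writing $|AC||A|/|C|=\frac{|AC|^2}{|A||C|}\cdot\frac{|A|^2}{|AC|}\le 2d(A)|A|$ (using $|AC|\ge|A|$, valid since $C$ contains a nonzero element), you still need the complementary bound $d(A)\le|A|$ --- immediate from taking $C=\{1\}$ in the definition of $d(A)$ --- to conclude $d(A)|A|\le d^{1/3}(A)|A|^{5/3}$; the inequality $d(A)\ge1$ only handles the term $|L|/|C|=|A|$. With that one line added, and the harmless normalisation $0\notin A\cup C$ (which the paper's standing hypothesis $A\subset\R\setminus\{0\}$ supplies), your proof is complete and gives the stated bound with explicit constants.
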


Lemma \ref{thm:ST} is a consequence of the Szemer\'{e}di-Trotter theorem, building on the work of Li and Roche-Newton \cite{LR} and Schoen and Shkredov \cite{SS}.

\begin{proof}

Without loss of generality, we may assume that $A$ consists of strictly positive real numbers. Following the notation of \cite{KS}, for a real nonzero $\lambda$, define
$$ \mathcal A_{\lambda}:= \{(x,y) \in A \times A : \frac{y}{x}=\lambda\},$$
and its projection onto the horizontal axis,
$$A_{\lambda}:=\{x:(x,y) \in \mathcal A_{\lambda}\}.$$
Note that $|A_{\la}|=|A \cap \la A|$ and
\begin{equation}
\sum_{\la} |A_{\la}|=|A|^2.
\label{obvious}
\end{equation}

For each $\lambda \in A:A$, we identify an arbitrary element from $\mathcal A_{\la}$, which we label $(a_{\lambda},\lambda a_{\lambda})$. Then, fixing distinct two slopes $\lambda_1$ and $\lambda_2$ from $A:A$ and following the observation of Balog \cite{B}, we note that at least $|A|^2$ distinct elements of $(AA+AA) \times (AA+AA)$ are obtained by summing pairs of vectors from the two lines through origin with slope $\lambda_1$ and $\lambda_2$. Indeed,
$$A(a_{\lambda_1},\lambda_1 a_{\lambda_1})  + A (a_{\lambda_2},\lambda_2a_{\lambda_2})  \subset (AA+AA) \times (AA+AA),$$
where for $\lambda \in A:A$
$$
A(a_{\lambda},\lambda a_{\lambda}) = \{(aa_{\lambda},\lambda aa_{\lambda}):\,a\in A\}.
$$
Note that these $|A|^2$ vector sums have slope in between $\lambda_1$ and $\lambda_2$. This is a consequence of the observation of Solymosi \cite{So} that the sum set of $m$ points on one line through the origin and $n$ points on another line through the origin consists of $mn$ points lying in between the two lines.  This fact expresses linear independence of two vectors in the two given directions, combined with the fact that multiplication by positive numbers preserves order of reals. Hence the assumption that the points lie inside the positive quadrant of the plane.

Following the strategy of Konyagin and Shkredov \cite{KS}, we split the family of $|A:A|$ slopes into clusters of $M$ consecutive slopes, where $2\leq M \leq |A:A|$ is a parameter to be specified later. The idea is to show that each cluster determines many different elements of $(AA+AA) \times (AA+AA)$. Since the slopes of these elements are in between the maximal and minimal values in that cluster, we can then sum over all clusters without overcounting.

If a cluster contains exactly $M$ lines, then it is called a \textit{full cluster}. Note that there are $\left\lfloor \frac{|A:A|}{M} \right\rfloor \geq \frac{|A:A|}{2M}$ full clusters, since we place exactly $M$ lines in each cluster, with the possible exception of the last cluster which contains at most $M$ lines.

Let $U$ be a full cluster, with shallowest slope $\lambda_{min}$ and steepest slope $\lambda_{max}$. Let $\mu(U)$ denote the number of elements of $(AA+AA) \times (AA+AA)$ which lie in between $\lambda_{min}$ and $\lambda_{max}$. Then, by the inclusion-exclusion principle

%Let $k=|\mathcal{L}_j|$ and write $\mathcal L_j=\{l_1,l_2,\dots,l_{k}\}$, where the slope of $l_i$ is smaller than that of $l_{i+1}$. We denote the slope of the line %$l_i$ by $\lambda_{l_i}$. The set of lines $\mathcal L_j$ is now divided into clusters of size $M$, with each cluster consisting of consecutive lines from $\mathcal %L_j$. So, the first cluster is $U_1=\{l_1,l_2,\dots,l_M\}$, and a cluster $U_i$ consists of $M$ consecutive lines, with the possible exception of the last cluster. %However, there are at least $\lfloor \frac{k}{M} \rfloor \geq \frac{k}{2M}$ clusters containing $M$ lines.

%Fix a cluster of lines $U_i=\{l_{(i-1)M+1},\dots l_{iM}\}$ of size $M$. The aim is to show that by taking vector sums of pairs of lines from $U_i$, we obtain many %distinct elements of $(AA+A) \times (AA+A)$, all of which have a slope which is in between $\lambda_{l_{(i-1)M+1}}$ and $\lambda_{l_{iM}}$. We will then sum %over all clusters.

\begin{equation}\mu(U) \geq |A|^2 {M \choose 2} - \sum_{\lambda_1,\lambda_2,\lambda_3,\lambda_4 \in U_j: \{\lambda_1,\lambda_2\} \neq \{\lambda_3,\lambda_4\}} \E(\lambda_1,\lambda_2,\lambda_3,\lambda_4),
\label{mucount2}
\end{equation}
where
$$
\E(\lambda_1,\lambda_2,\lambda_3,\lambda_4)  :=
 |\{[A(a_{\la_1},\la_1 a_{\la_1}) +A(a_{\lambda_2},\lambda_2a_{\lambda_2})]\cap [A(a_{\lambda_3},\lambda_3a_{\lambda_3})+A(a_{\lambda_4},\lambda_4a_{\lambda_4})] \}|.
$$

The next task is to obtain an upper bound for $\E(\lambda_1,\lambda_2,\lambda_3,\lambda_4)$ for an arbitrary quadruple $(\la_1,\la_2,\la_3,\la_4)$ which satisfies the aforementioned conditions.

Suppose that
$$
z=(x,y) \in [A(a_{\lambda_1},\lambda_1a_{\lambda_1})+A(a_{\lambda_2},\lambda_2a_{\lambda_2})]
\cap  [A(a_{\lambda_3},\lambda_3a_{\lambda_3})+A(a_{\lambda_4},\lambda_4a_{\lambda_4})],
$$
that is
$$
(x,y) =(aa_{\la_1},a\lambda_1a_{\la_1})+(ba_{\lambda_2},b\lambda_2a_{\lambda_2})
=(ca_{\la_3},c\lambda_3a_{\la_3})+(da_{\lambda_4},d\lambda_4a_{\lambda_4}),
$$
for some $a,b,c,d \in A$. Therefore,
$$\begin{array}{lccccccc}
x&=&aa_{\la_1}+ba_{\lambda_2}&=&ca_{\la_3}+da_{\lambda_4}\\
y&=&a\lambda_1a_{\la_1}+b\lambda_2a_{\lambda_2}&=&c\lambda_3a_{\la_3}+d\lambda_4a_{\lambda_4}.
\end{array}$$
It follows from the conditions on the quadruple $(\la_1,\la_2,\la_3,\la_4)$ that at least one of its members differs from the other three. Without loss of generality $\la_4\neq \la_1,\la_2,\la_3$. Then
$$0=a\lambda_1a_{\la_1}+b\lambda_2a_{\lambda_2}-c\lambda_3a_{\la_3}-d\lambda_4a_{\lambda_4} - \lambda_4(aa_{\la_1}+ba_{\lambda_2}-ca_{\la_3}-da_{\lambda_4}),$$
and thus
\begin{equation}
0=aa_{\la_1}(\lambda_1-\lambda_4)+ba_{\lambda_2}(\lambda_2-\lambda_4)+ca_{\la_3}(\lambda_4-\lambda_3).
\label{STsetup}
\end{equation}

Note that the values $a_{\la_1}(\lambda_1-\lambda_4), a_{\lambda_2}(\lambda_2-\lambda_4)$ and $a_{\la_3}(\lambda_4-\lambda_3)$ are all non-zero. We have shown that each contribution to $\E (\lambda_1,\lambda_2,\lambda_3,\lambda_4)$ determines a solution to \eqref{STsetup}. Furthermore, the solution $(a,b,c)$ to \eqref{STsetup} that we obtain via this deduction is unique. That is, if we start out with a different element
$$z \in [A(a_{\lambda_1},\lambda_1a_{\lambda_1})+A(a_{\lambda_2},\lambda_2a_{\lambda_2})]
\cap  [A(a_{\lambda_3},\lambda_3a_{\lambda_3})+A(a_{\lambda_4},\lambda_4a_{\lambda_4})],$$
we obtain a different solution to \eqref{STsetup}. It therefore follows from an application of Lemma \ref{thm:ST} that
$$\E(\lambda_1,\lambda_2,\lambda_3,\lambda_4) \leq C\cdot d^{1/3}(A)|A|^{5/3},$$
where $C$ is an absolute constant. Therefore,
\begin{align}
\mu(U)&\geq {M \choose 2} |A|^2 - M^4Cd^{1/3}(A)|A|^{5/3}
\\& \geq \frac{M^2}{4}|A|^2 - M^4Cd^{1/3}(A)|A|^{5/3}.
\label{mu}
\end{align}
Choosing the integer valued parameter
\begin{equation}M:=\left \lfloor \frac{|A|^{1/6}}{\sqrt{8C}d^{1/6}(A)} \right \rfloor\label{chM}\end{equation}
yields
\begin{equation}
\mu(U) \geq \frac{M^2}{8}|A|^2.
\label{mu2}
\end{equation}
Recall that, in fact we need $2 \leq M \leq |A:A|$. It is easy to check that the upper bound for $M$ is satisfied. Besides,  $M\leq 2$ implies that $d(A)\gg |A|$, in particular $|A:A|\gg |A|^{3/2}.$ Then the basic claim \eqref{basicST} results in a  stronger statement than Theorem \ref{epsilon1} and we are done.

Thus we proceed assuming that $2 \leq M \leq |A:A|$. Summing over the full clusters, of which there are at least $\frac{|A:A|}{2M}$, yields
\begin{align}
|AA+AA|^2\geq \frac{|A:A|}{2M}\frac{M^2}{8}|A|^2
\gg \frac{|A:A||A|^{13/6}}{d^{1/6}(A)}.
\label{aa+a}
\end{align}

To complete the proof, we need a suitable upper bound for $d(A)$. We simply use the trivial\footnote{Another approach would be to use non-trivial arguments from \cite{Sh2} to prove the bound $d(A) \ll |AA+AA|^2/|A|^3$, which implies that $|AA+AA| \gg |A|^{8/7}|A:A|^{3/7}$. This result is better than Theorem \ref{epsilon1} when $|A:A|$ is large. However, since this does not result in better unconditional bounds for $|AA+AA|$, we do not pursue the details here.} bound $d(A) \leq |A:A|^2/|A|^2$. Combining this inequality with \eqref{aa+a} and rearranging, we get
$$|AA+AA| \gg |A:A|^{1/3}|A|^{5/4},$$
which completes the proof of \eqref{epsilon1eq}. To prove \eqref{epsilon2eq}, we simply apply the trivial bound $|A:A| \geq |A|$.
\end{proof}

\begin{proof}[Proof of Theorem \ref{epsilon2}]
It was established in \cite[Theorem 11]{KS} that
\begin{equation}
|A:A|^{6}|A-A|^{5} \gg \frac{|A|^{14}}{\log^2|A|}.
\label{LiORN}
\end{equation}
Plugging this into the bound \eqref{basicST} and rearranging, we obtain
$$|AA-AA|^{12} \gg \frac{|A|^{26}}{|A-A|^{5}\log^{2}|A|}.$$
The claim of Theorem \ref{epsilon2} follows by applying the trivial bound $|A-A| \leq |AA-AA|$ and rearranging.
\end{proof}

\section*{Appendix}
In this section we give two heuristic arguments, showing the relation of Conjecture \ref{cj} to other reputable topics in geometric combinatorics and incidence theory.

\subsection*{Conjecture \ref{cj} and the Erd\H os-Szemer\'edi problem}
Let us call the {\em weak Erd\H os-Szemer\'edi conjecture} the statement that for $A\subset F$, the assumption $|AA|=O(|A|^{1+\delta})$ implies that $|A+A|=\Omega(|A|^{2-\epsilon})$ where the small parameters $(\delta,\epsilon)$ are on a polynomial curve through the origin. The question has been resolved qualitatively in the integer case, and therefore the $F=\mathbb Q$ case by Bourgain and Chang \cite{BC}, the dependence between $\delta$ and $\epsilon$ not having been made explicit. It is open over the real, complex, and $p$-adic fields. 

Let us show that a slightly stronger statement than Conjecture \ref{cj} implies the weak Erd\H os-Szemer\'edi conjecture. Namely suppose, we have the second moment bound $O(|P|^3)$ on the number $\E(P)$ of solutions of equation \eqref{eng}.

Then we have, assuming $0\not\in A$:
$$\begin{aligned}
\E(A) & =  |\{(a_1,a_2,a_3,a_4)\in A\times A\times A\times A:\, a_1+a_2=a_3+a_4\}| \\
& =  |A|^{-4}|\{(a_1,\ldots, a_8)\in A\times \ldots \times A:\,  a_1a_5/a_5+a_2a_6/a_6=a_3a_7/a_7+a_4a_8/a_8\}|\\
& \leq |A|^{-4}|\{(p_1,p_2,p_3,p_4, b_1,b_2,b_3,b_4)\in AA\times \ldots \times AA \times A^{-1}\times\ldots\times A^{-1}:\, \\& \hspace{16mm} p_1b_1+p_2b_2=p_3b_3+p_4b_4\}| \\
& \ll |AA|^3|A|^{-1},
\end{aligned}$$
after applying the assumption for the plane point set $AA\times A^{-1}$.

It follows, by the Cauchy-Schwarz inequality that
$$
|A\pm A| \geq \frac{|A|^4}{\E(A)} \gg \frac{|A|^5}{|AA|^3},
$$
thus if $|AA|< |A|^{1+\delta},$ then $|A\pm A|\gg |A|^{2-3\delta}.$

\subsection*{Example: on popular values of $\omega$}
We conclude this note by revisiting the well-known Erd\H os construction showing that one can have a plane point set $P$ of  $\Theta(N)$ points with $\gg 1$ nonzero values of the form $\omega$ repeating $\Omega(N^{4/3})$ times. By the Szemer\'edi-Trotter theorem a single nonzero value of $\omega$ may only have $O(N^{4/3})$ realisations.

Without loss of generality let $\omega$ be the standard dot product. The equation $q'\cdot q =1$ can be viewed as an incidence relation between $N$ points described by vectors $q\in P$  and $N$ lines described by covectors $q'$. There are well known constructions, similar to the forthcoming one, where the number of incidences is $\Omega(N^{4/3}).$

Let us also show that the pinned version of equation \eqref{eng}
\begin{equation}\label{hm}
 \omega(q,q')=\omega(q,r')\neq 0:\; q,q',r'\in P
\end{equation}
may have as many as $\Omega(N^{7/3})$ solutions. Therefore, it is impossible to use it without additional assumptions on $P$ to settle Conjecture \ref{cj}.

Contrary to this,  we do not have evidence that equation \eqref{eng} can possibly have more than $O(N^{3})$ solutions. On the other hand,  we cannot prove at the moment an upper bound better than $O(N^{10/3})$ for the number of solutions of \eqref{eng}. The latter bound follows, e.g.,  from the $O(N^{4/3})$  bound on the number of realisations of a single nonzero value of $\omega$.

\medskip

To avoid taking integer parts, take $N$ to be $6$th power of an even integer. Let $P$ be the union of the integer grid square $P_1= [-\sqrt{N},\ldots,\sqrt{N}] \times [-\sqrt{N},\ldots, \sqrt{N}]$ and the constructed below set $P_2$ of approximately $N$ points, with some $N^{2/3}$ points lying in each of the approximately $N^{1/3}$ directions from the origin of the set of lines $L$, defined as follows. 

Take all the points with co-prime coordinates $(a,b)$ in the sub-square  $[-\sqrt[6]{N},\ldots, \sqrt[6]{N}] \times [-\sqrt[6]{N},\ldots, \sqrt[6]{N}]$ and define a family $L$ of $\Theta(N^{1/3})$ lines through the origin by the following equations:
$$
bx-ay=0,\qquad |a|,|b|\leq \sqrt[6]{N}:\;\mbox{ gcd}(a,b)=1.
$$
Clearly, $L=L^\perp$. Each line in $L$ supports  $\Theta(N^{1/2-1/6=1/3})$ points of $P_1$.

Now translate $L$ from the origin to every point of the subset  $[-\sqrt{N}/2,\ldots, \sqrt{N}/2] \times [-\sqrt{N}/2,\ldots, \sqrt{N}/2]$ of $P_1$. Since each line in $L$ supports $\Theta(N^{1/3})$ points of $P_1$, in the union $\mathcal L$ of the translated lines one gets $\Theta(N)$ lines in $\Theta(N^{1/3})$ directions, hence  $\Theta(N^{2/3})$ parallel lines in each direction. (Each line in $L$ gets translated $\Theta(N)$ times but two translates of $l\in L$ result in the same line in $\mathcal L$ if their difference is in $P_1\cap l$.)
In particular, since $L=L^\perp$ for every line in $L$, there are  $\Theta(N^{2/3})$ lines in $\mathcal L$, perpendicular to it, each supporting $\Theta(N^{1/3})$ points of $P_1$.

The equations of lines in $\mathcal L\setminus L$ are
$$
bx-ay=bi-aj\neq 0,\qquad |a|,|b|\leq \sqrt[6]{N}:\;\mbox{ gcd}(a,b)=1,\qquad |i|,|j|\leq \sqrt{N}/2.
$$
By construction, for every $(a,b)$ there are $\Theta(N^{2/3}$) values of the right-hand side above. Also, there are $\Theta(N)$ distinct points in the set
$$
P_2:= \left\{\left(\frac{b}{bi-aj},\,\frac{a}{bi-aj}\right): \,   |a|,|b|\leq \sqrt[6]{N},\,\mbox{ gcd}(a,b)=1;\; |i|,|j|\leq \sqrt{N}/2,\; bi-aj\neq 0 \right\}.
$$
Besides,  the equation $q\cdot q'=1$, with $q\in P_1,\,q'\in P_2$ has $\Omega(N^{4/3})$ solutions. The same can be said about the equation $q\cdot q'=c$, for at least a $O(1)$ set of values of $c$, by scaling $P_1$. So, one has $\Omega(N^{4/3})$ realisations for each of $\gg 1$ distinct dot product values on the set $P=P_1\cup P_2$ of $\Theta(N)$ points.

Observe that the set $P_2$ has the property of being the union of sets of $\Theta(N^{2/3})$ points supported on each of the $\Theta(N^{1/3})$ lines in $L$.  In fact, for the purposes of having $\Omega(N^{7/3})$ solutions to equation \eqref{hm} it suffices to take {\em any} set $P_2$ with the above property. Indeed, consider  equation \eqref{hm} with $q\in P_2$ and $q',r'\in P_1$. Each nonzero vector $q$ has $\Theta(N^{2/3})$ lines  in $\mathcal L$, perpendicular to it, each line supporting approximately $N^{1/3}$ points of $P_1$.

Thus  the number of solutions of equation \eqref{hm}, that is the number of solutions of $q\cdot(q'-r')=0$ is at least a constant times
\begin{equation}\label{imp}
N\cdot N^{2/3} \cdot (N^{1/3})^2 = N^{7/3}.
\end{equation}

This example suggests that there is a principal difference between Conjecture \ref{cj} -- which has been shown to be related to the Erd\H os-Szemer\'edi sum-product conjecture -- and the Erd\H os distance problem. For the analogue of \eqref{hm} set $q=r$ in \eqref{pd}. However, an analogue of the above construction leading to the estimate \eqref{imp} is impossible, for Pach and Tardos \cite{PT} proved the bound $O(N^{2.137})$ for the number of isosceles triangles defined by a point set in $\R^2$. Moreover, the Erd\H os distance type questions  would all follow up to $o(1)$ exponents from the single distance conjecture, claiming that any distance in the Euclidean plane point set repeats  only $O(N^{1+o(1)})$ times  (see, e.g., the book by Brass, Moser, and Pach \cite{BMP} for details). This is also not the case with single nonzero values of $\omega$, whose set appears to be considerably more volatile. 

We finish by noticing that an analogue of the construction in this section does not appear to be feasible within a single Cartesian product set $P=A\times A$, where the problem of distinct values of bilinear forms meets the Erd\H os-Szemer\'edi conjecture. 

\section{Acknowledgements}
We are grateful to Ernie Croot and Ilya Shkredov for helpful discussions, and for their -- as well as other colleagues' -- patience while this erratum was being drafted.


\begin{thebibliography}{4}

\bibitem{B} A. Balog. {\em A note on sum-product estimates.} Publ. Math. Debrecen \textbf{79}, no. 3-4 (2011), 283--289.

\bibitem{BC} J. Bourgain, M-C. Chang. {\em  On the size of k-fold sum and product sets of integers.} J. Amer. Math. Soc. {\bf 17} (2004), no. 2, 473--497.

\bibitem{BMP} P. Brass, W. O. Moser, J. Pach. {\em Research Problems in Discrete Geometry.} Springer, 2005.





\bibitem{Ch1} M-Ch. Chang. {\em  Sum and product of different sets.} Contrib. Discrete Math. {\bf 1} (2006), no. 1, 47--56.

\bibitem{GK} L. Guth,  N. H. Katz. {\it On the Erd\H os distinct distance problem in the plane}.   Ann. of Math. (2) {\bf 181} (2015), no. 1, 155--190.


\bibitem{ES} G. Elekes,  M. Sharir. {\em
Incidences in three dimensions and distinct distances in the plane.} Combin. Probab. Comput.
{\bf 20} (2011), no. 4, 571--608.

\bibitem{IKRT} A. Iosevich, S. Konyagin, M. Rudnev, M, V. Ten. {\em Combinatorial complexity of convex sequences.} Discrete Comput. Geom. {\bf 35} (2006), no. 1, 143--158.

\bibitem{IRR}A. Iosevich,  O. Roche-Newton, M. Rudnev. { \em On an application of Guth-Katz theorem.} Math. Res. Lett. {\bf 18} (2011), no. 4, 691--697.

\bibitem{J} T.G.F. Jones. {\em New results for the growth of sets of real numbers.}  Discrete Comput. Geom. 54 (2015), no. 4, 759--770.

\bibitem{KR} S.V. Konyagin, M. Rudnev. {\em On new sum-product type estimates.}  SIAM J. Discrete Math. {\bf 27} (2013), no. 2, 973--990.


\bibitem{KS}  S.V. Konyagin,  I. D. Shkredov. {\em On sum sets of sets, having small product set.} Preprint arXiv:1503.05771v3  [math.CO] 29 Mar 2015.

\bibitem{LR} L. Li and O. Roche-Newton {\em Convexity and a sum-product type estimate}, Acta Arith \textbf{156} (2012), 247-255.

\bibitem{PT} J. Pach, G. Tardos. {\em Isosceles triangles determined by a planar point set.} Graph theory and discrete geometry (Manila, 2001). Graphs Combin. {\bf 18} (2002), no. 4, 769--779.

\bibitem{RR} O. Roche-Newton, M. Rudnev. {\em On the Minkowski distances and products of sum sets.} Israel J. Math.  {\bf 209} (2015), no 2, 507--526.

\bibitem{RRS} O. Roche-Newton, M. Rudnev,  I.D. Shkredov. {\em New sum-product type estimates over finite fields.} Preprint arXiv:1408.0542 [math.CO]  24 Jul 2015.

\bibitem{MR} M. Rudnev. {\em On the number of incidences between planes and points in three dimensions.} Preprint arxiv:1407.0426  [math.CO] 4 Dec 2015.

\bibitem{RS} M. Rudnev, J. M. Selig. {\em  On the use of Klein quadric for geometric incidence problems in two dimensions.} Preprint arXiv:1412.2909  [math.CO] 9 Dec 2014.

\bibitem{SK} J. G. Semple, G. T. Kneebone. {\em Algebraic Projective Geometry.} Oxford University Press 1952.

\bibitem{SS} T. Schoen,  I. D. Shkredov. {\em Higher moments of convolutions}. J. Number Theory {\bf 133} (2013), no. 5, 1693--1737.

\bibitem{Sh2} I. D. Shkredov.  {\em On a question of A. Balog}. Preprint arXiv:1501.07498 [math.CO]  29 Jan 2015.

\bibitem{So} J. Solymosi. {\em Bounding multiplicative energy by the sumset.}
Adv. Math. {\bf 222} (2009), 402--408.

\bibitem{ST} J. Solymosi, G. Tardos. {\em On the number of k-rich transformations.} Computational geometry (SCG'07), 227--231, ACM, New York, 2007.

\bibitem{S-T} E. Szemer\'edi, W. T. Trotter, Jr. {\em Extremal problems in discrete geometry.} Combinatorica {\bf 3} (1983), no. 3-4, 381--392.

\bibitem{TV} T. Tao, V. Vu. {\em Additive Combinatorics.} Cambridge University Press 2006.

\bibitem{T}  C.D. T\'oth. {\em The Szemer\'edi-Trotter theorem in the complex plane.} Combinatorica {\bf 35} (2015), no. 1, 95--126.


\end{thebibliography}
\end{document}